    \def\MR#1{}
\theoremstyle{plain}
\newtheorem{Theorem}{Theorem}[section]
\newtheorem{Lemma}[Theorem]{Lemma}
\newtheorem{Corollary}[Theorem]{Corollary}
\theoremstyle{definition}
\newtheorem{Assumptions and Discussion}[Theorem]{Assumptions and Discussion}
\newtheorem{Example}[Theorem]{Example}
\newtheorem{Definition}[Theorem]{Definition}
\theoremstyle{remark}
\newtheorem*{acknowledgment*}{Acknowledgment}
\begin{document}

\title{Projective dimension and regularity of  edge ideals of  some vertex-weighted oriented  unicyclic graphs}

\author{Guangjun Zhu$^{\ast}$ and  Hong Wang}

\date{\today }

\address{School of Mathematical Sciences, Soochow University, Suzhou, Jiangsu, 215006, P. R. China}

\email{zhuguangjun@suda.edu.cn(Corresponding author:Guangjun Zhu),\linebreak[4]
651634806@qq.com(Hong wang).}

\thanks{2020 {\em Mathematics Subject Classification}.
 Primary: 13F20; Secondary 05C20, 05C22, 05E40.}

\thanks{projective dimension, regularity,  edge ideal,   vertex-weighted oriented  unicyclic graph, vertex-weighted  rooted forest}

\maketitle
\begin{abstract}
In this paper we provide some exact formulas for the projective dimension and the regularity of edge ideals of  vertex-weighted oriented  unicyclic graphs.  These formulas are in function of the weight of the vertices, the numbers of edges. We  also give some examples to show that these  formulas are related to direction selection and the assumptions that  $w(x)\geq 2$ for any vertex $x$ cannot be dropped.
\end{abstract}

\section{Introduction}
Let $S=k[x_{1},\dots,x_{n}]$ be a polynomial ring in $n$ variables over a field $k$ and let $I\subset S$ be a homogeneous ideal.
There are two central invariants associated to $I$, the regularity $\mbox{reg}\,(I):=\mbox{max}\{j-i\ |\ \beta_{i,j}(I)\neq 0\}$
 and the projective dimension $\mbox{pd}\,(I):=\mbox{max}\{i\ |\ \beta_{i,j}(I)\neq 0\ \text{for some}\ j\}$, that in a sense, they measure the complexity of computing the graded Betti numbers $\beta_{i,j}(I)$ of $I$. In particular, if $I$ is a
monomial ideal,  its polarization  $I^{\mathcal{P}}$ has the same projective dimension and regularity as $I$ and is squarefree. Thus one can associate $I^{\mathcal{P}}$ to a graph or a hypergraph or  a simplicial complex.
  Many authors have studied the regularity
and Betti numbers of edge ideals of graphs, e.g. \cites{AF2,BHTN,BHO,HT3,J,KM,MV,Z1,Z2,Z3}.

A {\em directed graph} or {\em digraph} $D$ consists of a finite set $V(D)$ of vertices, together
with a collection $E(D)$ of ordered pairs of distinct points called edges or
arrows. A vertex-weighted directed graph is a triplet $D=(V(D), E(D),w)$, where  $w$ is a weight function $w: V(D)\rightarrow \mathbb{N}^{+}$, where $N^{+}=\{1,2,\ldots\}$.
Some times for short we denote the vertex set $V(D)$ and the edge set $E(D)$
by $V$ and $E$ respectively.
The weight of $x_i\in V$ is $w(x_i)$, denoted by $w_i$ or $w_{x_i}$.

The edge ideal of a vertex-weighted digraph was first introduced by Gimenez et al \cite{GBSVV}. Let $D=(V,E,w)$ be a  vertex-weighted digraph with the vertex set $V=\{x_{1},\ldots,x_{n}\}$. We consider the polynomial ring $S=k[x_{1},\dots, x_{n}]$ in $n$ variables over a field $k$. The edge ideal of $D$,  denoted by $I(D)$, is the ideal of $S$ given by
\[I(D)=(x_ix_j^{w_j}\mid  x_ix_j\in E).\]

Edge ideals of weighted digraphs arose in the theory of Reed-Muller codes as initial ideals of vanishing ideals
of projective spaces over finite fields \cites{MPV,PS}.
If a vertex $x_i$ of $D$ is a source (i.e., has only arrows leaving $x_i$) we shall always
assume $w_i=1$ because in this case the definition of $I(D)$ does not depend on the
weight of $x_i$.  If  $w_j=1$ for all $j$, then $I(D)$ is the edge ideal of underlying graph $G$ of $D$.
It has been extensively studied in the literature  \cites{HT3,MV,SVV}.
Especially the study of algebraic invariants corresponding to their minimal free resolutions has become popular(see
 \cites{AF2,BHTN,BHO,J,KM,Z1,Z2,Z3}). In \cite{Z3}, the first three authors  derive some exact formulas for the projective
dimension and regularity of the edge ideals associated to  some vertex-weighted digraphs such as rooted forests,  oriented cycles.
To the best of our knowledge, little is known about the projective
dimension  and the regularity of $I(D)$ for some  vertex-weighted digraphs.

In this article, we are interested in algebraic properties corresponding to the projective
dimension and the regularity of the edge ideals of  vertex-weighted oriented  unicyclic graphs.
 By using the approaches of Betti splitting and polarization, we derive some exact formulas  for the projective
dimension  and the regularity  of these edge ideals.
The results are as follows:

\begin{Theorem}
Let $T_j=(V(T_j),E(T_j),w_j)$ be a vertex-weighted  rooted forest such that $w(x)\geq 2$ if $d(x)\neq 1$ for  $1\leq j\leq s$ and $C_n$ a vertex-weighted oriented
cycle with  vertex set  $\{x_1,x_2,\ldots,x_n\}$ and $w_{x_i}\geq 2$ for   $1\leq i\leq n$. Let $D=C_n\cup(\bigcup\limits_{j=1}^{s}T_j)$ be a vertex-weighted oriented graph obtained by attaching  the root of $T_j$  to vertex $x_{i_j}$ of the cycle $C_{n}$ for $1\leq j\leq s$.
 Then
\begin{enumerate}[1]
\item $\mbox{reg}\,(I(D))=\sum\limits_{x\in V(D)}w(x)-|E(D)|+1$,
\item $\mbox{pd}\,(I(D))=|E(D)|-1$.
\end{enumerate}
\end{Theorem}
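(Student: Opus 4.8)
The plan is to compute both invariants by induction on the number of edges $|E(D)|$, using Betti splittings to peel the trees off one edge at a time and falling back on the formulas of \cite{Z3} for vertex-weighted oriented cycles and for vertex-weighted rooted forests. Since polarization preserves both $\reg$ and $\pd$, I will freely replace a generator $x_ix_j^{w_j}$ or a pure power $x_j^{w_j}$ by its squarefree polarization whenever that makes the monomial combinatorics cleaner. The base case is $s=0$, i.e.\ $D=C_n$ with all weights $\geq 2$, where the two equalities are exactly what \cite{Z3} gives. Throughout, whenever a vertex is deleted or a weight is decreased one must check that the resulting oriented graph still lies in the class of the theorem (cycle weights $\geq 2$ and $w\geq 2$ at every non-leaf); the one thing to watch is that deleting a leaf can turn its parent into a leaf, which is harmless since leaves carry no weight constraint.

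For the inductive step with $s\geq 1$, choose a tree $T_j$ having at least one edge, let $x$ be a leaf of $T_j$ with parent $y$, let $e$ be the edge at $x$, and let $g$ be the generator of $I(D)$ coming from $e$; note that the hypotheses force $w_y\geq 2$ (if $y$ is not a root this is the condition on $T_j$, and if $y$ is a root then $y$ lies on $C_n$, so $w_y\geq 2$). Write $I(D)=(g)+I(D-x)$, where $D-x$ deletes the now-isolated vertex $x$, so $D-x$ is again in our class with $|E(D-x)|=|E(D)|-1$. Since $(g)$ is principal, $G(I(D))=\{g\}\sqcup G(I(D-x))$ and one checks that $(g)+I(D-x)$ is a Betti splitting, whence
\begin{align*}
\reg(I(D))&=\max\bigl\{\reg((g)),\ \reg(I(D-x)),\ \reg((g)\cap I(D-x))-1\bigr\},\\
\pd(I(D))&=\max\bigl\{\pd((g)),\ \pd(I(D-x)),\ \pd((g)\cap I(D-x))+1\bigr\}.
\end{align*}
Here $\pd((g))=0$ and $\reg((g))=\deg g$, while $\reg(I(D-x))$ and $\pd(I(D-x))$ are supplied by the induction hypothesis, so everything is pushed onto the intersection ideal $(g)\cap I(D-x)$.

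Because $g$ is a monomial and $x_x$ divides no generator of $I(D-x)$, every $\lcm(g,h)$ with $h\in G(I(D-x))$ splits off $x_x$; collecting these and factoring out the largest common monomial exhibits $(g)\cap I(D-x)=m\cdot I'$, where $m$ is an explicit monomial in $x_x$ and $x_y$ and $I'$ is, up to relabelling, the edge ideal of a vertex-weighted rooted forest on fewer edges, modified near $y$ by a bounded number of extra generators — pure powers $x_z^{w_z}$, bare variables $x_z$, or a generator re-attaching $y$ with reduced weight $w_y-1\geq 1$, one for each neighbour of $y$, according to the orientations at $y$. Using $\reg(m\cdot J)=\reg(J)+\deg m$ and $\pd(m\cdot J)=\pd(J)$ for any monomial $m$, together with the behaviour of $\reg$ and $\pd$ under sums of monomial ideals in disjoint variable sets, the invariants of $I'$ reduce to those of such a (slightly enlarged) forest ideal; one more application of the same Betti-splitting device, together with the forest formula of \cite{Z3}, evaluates them. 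Substituting back and simplifying yields
\[
\reg(I(D))=\sum_{v\in V(D)}w(v)-|E(D)|+1,\qquad \pd(I(D))=|E(D)|-1.
\]
The hypotheses $w(x_i)\geq 2$ on the cycle and $w(x)\geq 2$ at non-leaves enter precisely here: they force the term coming from $(g)\cap I(D-x)$ to realize the maximum in the first formula and the ``$+1$'' term to realize it in the second.

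The main obstacle is the identification and evaluation of $(g)\cap I(D-x)$. When $y$ has several leaf-children (for instance near a star), the intersection genuinely carries several of the extra generators above, so one cannot arrange for $y$ to have degree two; handling this needs either a secondary induction peeling those leaf-children one at a time, or an auxiliary lemma computing $\reg$ and $\pd$ for the enlarged class ``edge ideal of a vertex-weighted rooted forest together with pure powers (or bare variables) adjoined at source vertices''. Showing that this enlarged class is closed under the reductions, that the weight hypotheses propagate so that no non-leaf of weight $1$ is ever created, and that the intersection term always attains the relevant maximum, is where the bulk of the verification lies.
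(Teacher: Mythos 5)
Your toolkit (polarization, Betti splittings via Lemma~\ref{lem:1}, the forest and cycle formulas of Lemma~\ref{lem:8} and Lemma~\ref{lem:9}) is the same as the paper's, but the splitting you choose is different and it creates a gap your sketch does not close. The paper makes a single splitting at a \emph{cycle} edge: with $e=x_nx_1$ and $K$ the polarized generator of $e$, the complementary ideal $J$ is the polarized edge ideal of the rooted tree $D\setminus e$, so both $J$ and the intersection $J\cap K=K(L_1+L_2)$ fall into the rooted-forest class (plus pure powers in fresh polarized variables), where Lemma~\ref{lem:8}, Lemma~\ref{lem:4} and Lemma~\ref{lem:5} finish the computation with no induction. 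You instead peel a leaf edge $g=x_yx_x^{w_x}$ off a pendant tree and induct on $|E(D)|$ with the bare cycle as base case. The splitting $I(D)=(g)+I(D-x)$ is indeed a Betti splitting and the induction handles $I(D-x)$; the problem is the intersection.

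Concretely, $(g)\cap I(D-x)=x_x^{w_x}x_y\cdot I'$ with
\[
I'=(x_px_y^{w_y-1})+(x_z^{w_z}\ :\ z\in N_D^{+}(y)\setminus\{x\})+I(D\setminus\{x,y\}),
\]
where $p$ is the in-neighbour of $y$. Contrary to your description, $I'$ is \emph{not} ``the edge ideal of a vertex-weighted rooted forest modified near $y$'': whenever the peeled leaf $x$ sits at depth at least $2$ in its tree, the vertex $y$ is off the cycle, so $I(D\setminus\{x,y\})$ still contains the entire cycle $C_n$ together with all the other pendant trees; and since deep trees force you at some stage to peel such a leaf, this case cannot be avoided. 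Moreover the extra generators $x_px_y^{w_y-1}$ and $x_z^{w_z}$ share the variables $x_p$ and $x_z$ with the surviving edge generators, so the disjoint-support reductions of Lemma~\ref{lem:4} and Lemma~\ref{lem:5} do not apply directly. Evaluating $\reg(I')$ and $\pd(I')$ therefore requires the theorem, strengthened to the class ``unicyclic edge ideal with pure powers and one reduced-weight edge adjoined'' --- exactly the auxiliary lemma you defer to in your last paragraph. Without formulating that strengthened statement, proving it is closed under your reduction, and verifying the weight hypotheses propagate, the induction is circular; this is the substantive gap. The paper's way out is to make the first (and only) cut at a cycle edge so that the cycle is destroyed immediately and every ideal appearing afterwards is covered by the known forest results.
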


\begin{Theorem}
Let $D=(V(D),E(D),w)$ be a  vertex-weighted oriented unicyclic graph such that  $w(x)\geq 2$  if  $d(x)\neq 1$. Then
\begin{enumerate}[1]
\item $\mbox{reg}\,(I(D))=\sum\limits_{x\in V(D)}w(x)-|E(D)|+1$,
\item $\mbox{pd}\,(I(D))=|E(D)|-1$.
\end{enumerate}
\end{Theorem}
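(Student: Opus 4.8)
The plan is to argue by induction on $|V(D)|$, which for a connected unicyclic graph equals $|E(D)|$, peeling off one pendant edge at a time via a Betti splitting and anchoring the induction on the preceding theorem (and, through it, on the rooted‑forest and oriented‑cycle formulas of \cite{Z3}). First record a structural consequence of the hypothesis: if $v$ is not a leaf then $d(v)\ne 1$, so $w(v)\ge 2$; since a source carries weight $1$ by convention, no non‑leaf vertex is a source, i.e. every non‑leaf vertex has an incoming arrow. In the base case $D$ has no leaf, so $D$ is its own unique cycle $C_n$; then each vertex of $C_n$ must receive its incoming arrow from inside $C_n$, which forces $C_n$ to be a directed cycle with all weights $\ge 2$, and the conclusion is the preceding theorem with $s=0$. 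For the inductive step let $x$ be a leaf of $D$, joined to a vertex $y$ by the edge $e$, and set $D'=D\setminus\{x\}$. Then $D'$ is again a vertex‑weighted oriented unicyclic graph satisfying the weight hypothesis — the only degree that changes is $d(y)$, and if $d_{D'}(y)\ne 1$ then already $d_D(y)\ne 1$, so $w(y)\ge 2$ — so the inductive hypothesis gives $\reg(I(D'))=\sum_{v\in V(D')}w(v)-|E(D')|+1$ and $\pd(I(D'))=|E(D')|-1$.

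Since $e$ is the only edge containing $x$, its monomial generator $m_e$ is the unique minimal generator of $I(D)$ divisible by $x$; hence $\mathcal{G}(I(D))=\mathcal{G}(I(D'))\sqcup\{m_e\}$ and $I(D)=(m_e)+I(D')$, which one verifies is a Betti splitting. Let $p$ be $m_e$ with all powers of $x$ removed, a power of $y$: explicitly $p=y$ when $e=(y\to x)$, and $p=y^{w(y)}$ when $e=(x\to y)$, in which latter case $x$ is a source and $w(x)=1$. Because $x$ occurs in no generator of $I(D')$ one has $(m_e)\cap I(D')=m_e\cdot(I(D'):p)$, and multiplication by the monomial $m_e$ is an isomorphism up to degree shift, so $\pd((m_e)\cap I(D'))=\pd(I(D'):p)$ and $\reg((m_e)\cap I(D'))=\deg(m_e)+\reg(I(D'):p)$. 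Feeding this, together with $\reg((m_e))=\deg(m_e)$, $\pd((m_e))=0$ and the inductive values for $I(D')$, into the Betti‑splitting identities
\[
\reg(I(D))=\max\{\reg((m_e)),\ \reg(I(D')),\ \reg((m_e)\cap I(D'))-1\},
\]
\[
\pd(I(D))=\max\{\pd((m_e)),\ \pd(I(D')),\ \pd((m_e)\cap I(D'))+1\},
\]
and using $|E(D)|=|V(D)|$, the theorem reduces — after a short check that the first two terms of each maximum are dominated by the third, which for the regularity uses that the (at least three) vertices of the cycle are distinct from $x$ — to the two equalities
\[
\pd(I(D'):p)=|E(D)|-2,\qquad \reg(I(D'):p)=\textstyle\sum_{v\in V(D)}w(v)-|E(D)|-\deg(m_e)+2.
\]

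It remains to compute $\pd$ and $\reg$ of the colon ideal $(I(D'):p)$, which is the heart of the matter. Going through $I(D')$ generator by generator, $(I(D'):p)$ is the sum of the edge ideal of $D'\setminus\{y\}$, the variables $z$ for the in‑neighbours $z$ of $y$ in $D'$, and the pure powers $z^{w(z)}$ for the out‑neighbours $z$ of $y$ in $D'$ — with the proviso that an edge $u\to z$ contributes nothing once $z^{w(z)}$ is already present, so many generators drop out. Absorbing each in‑neighbour variable $z$ into the corresponding vertex deletion (via $(z)+I(H)=(z)+I(H\setminus\{z\})$) rewrites $(I(D'):p)$ as a sum, in disjoint sets of variables, of a monomial ideal generated by variables and by pure powers $z^{w(z)}$ each in a variable occurring nowhere else, and the edge ideal of a proper induced subgraph $H\subseteq D'$; the components of $H$ are vertex‑weighted rooted forests (covered by the preceding theorem and \cite{Z3}, or by the same induction applied to the tree case) together with at most one vertex‑weighted oriented unicyclic component, a smaller instance of the present theorem. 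From there $\pd$ and $\reg$ of $(I(D'):p)$ follow by standard rules: invariants behave additively under disjoint unions in separate variables; adjoining a variable to the generating set raises $\pd$ by $1$ and leaves $\reg$ unchanged; and adjoining a pure power $z^{w(z)}$ in a variable absent from the rest, being multiplication by a non‑zerodivisor of degree $w(z)$, raises $\pd$ by $1$ and $\reg$ by $w(z)-1$. Matching these against the two displayed equalities closes the induction. The main obstacle is precisely this last analysis: one must split into cases according to whether $e$ points toward or away from $x$ and according to the arrows at $y$ (in particular whether $y$ lies on the cycle and how many of its $D'$‑neighbours are leaves), correctly identify which generators of the colon become redundant, and track the components of $H$; this is where polarization enters, replacing $I(D)$ by the squarefree ideal $I(D)^{\mathcal{P}}$ so that the pure powers become honest (hyper)edges and the splitting, colon, and redundancy bookkeeping can all be done with squarefree combinatorics. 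I expect the regularity statement to be the more delicate half, since the pure‑power contributions $w(z)-1$ must be accounted for exactly, whereas the projective‑dimension formula ultimately only asks that $\depth(S/I(D))=0$, i.e. that some monomial be a socle element modulo $I(D)$.
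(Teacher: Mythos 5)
Your architecture is genuinely different from the paper's: the paper handles the connected case (Theorem \ref{thm:3}) by a \emph{single} Betti splitting at the cycle edge $e=x_nx_1$, so that $J$ becomes the polarized edge ideal of a rooted tree and everything reduces to the forest theorem of \cite{Z3}, whereas you induct by peeling off pendant edges until only the cycle remains. Your opening observation --- that $w(x)\ge 2$ for $d(x)\ne 1$, combined with the convention $w=1$ at sources, forces every non-leaf vertex to have an in-arrow and hence forces the unique cycle to be consistently directed --- is correct and is in fact a point the paper leaves implicit. The splitting $I(D)=(m_e)+I(D')$ at a pendant edge is a valid Betti splitting by Lemma \ref{lem:1}, and the reduction of $(m_e)\cap I(D')$ to $m_e\cdot(I(D'):p)$ is fine. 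But the colon-ideal analysis, which you rightly identify as the heart of the matter, contains two concrete errors that break the computation.

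First, when $e=(y\to x)$ and $p=y$, the in-neighbour contribution is not the variable $z$: for an edge $z\to y$ in $D'$ one has $(zy^{w(y)}:y)=zy^{w(y)-1}$, and since such a $y$ automatically has $d(y)\ge 2$ and an in-arrow, $w(y)\ge 2$ and the exponent $w(y)-1\ge 1$ survives. The colon is then the edge ideal of a digraph in which $y$ persists with weight $w(y)-1$, possibly equal to $1$ at a vertex that still has several in-neighbours; such an ideal is not covered by your inductive hypothesis, and your ``absorb $z$ and delete $y$'' step is unavailable. Second, the pure powers $z^{w(z)}$ coming from out-neighbours $z$ of $y$ are \emph{not} in variables disjoint from the rest of the colon: if $z$ has its own out-neighbour $u$, the generator $zu^{w(u)}$ survives and shares $z$ with $z^{w(z)}$. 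Treating them as disjoint inflates the regularity by $1$ for each such $z$; for instance $\reg((z^2,zu^2))=\reg(z\cdot(z,u^2))=1+2=3$, whereas the disjoint-variable rule of Lemma \ref{lem:4} would give $2+3-1=4$. This is precisely why the paper's proof of Theorem \ref{thm:3}, after polarizing, separates the out-neighbours of the splitting vertex into those with descendants (whose pure powers are absorbed as extra edges of the auxiliary rooted forest $H'$, forming $L_1$) and the leaf out-neighbours (which alone form the genuinely disjoint ideal $L_2$); you mention polarization at the end, but the counting rules you actually state skip this distinction and would return the wrong regularity. A further unproved step: the graphs $H$ produced by deleting a cycle vertex are arbitrary oriented forests satisfying the weight hypothesis, not necessarily rooted forests oriented away from their roots, so they are not literally covered by Lemma \ref{lem:8}; the parallel induction for trees that you allude to is an additional theorem that would have to be proved.
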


Our paper is organized as follows. In section $2$, we recall some
definitions and basic facts used in the following sections. In section $3$, we
provide some exact formulas for the projective
dimension and the regularity of  the edge
ideals of vertex-weighted oriented unicyclic graphs.
 Meanwhile, we give some examples to show the projective
dimension and the regularity  of these edge ideals are related to
direction selection and  the assumption  that
$w(x)\geq 2$  for any vertex $x$ cannot be dropped.

For all unexplained terminology and additional information, we refer to \cite{JG} (for the theory
of digraphs), \cite{BM} (for graph theory), and \cites{BH,HH} (for the theory of edge ideals of graphs and
monomial ideals). We greatfully acknowledge the use of the computer algebra system CoCoA (\cite{C}) for our experiments.

\section{Preliminaries}
In this section, we gather together the needed  definitions and basic facts, which will
be used throughout this paper. However, for more details, we refer the reader to \cites{AF2,BM,FHT,HT1,HH,J,JG,MPV,PRT,Z3}.

A {\em directed graph} or {\em digraph} $D$ consists of a finite set $V(D)$ of vertices, together
with a collection $E(D)$ of ordered pairs of distinct points called edges or
arrows. If $\{u,v\}\in E(D)$ is an edge, we write $uv$ for $\{u,v\}$, which is denoted to be the directed edge
where the direction is from $u$ to $v$ and $u$ (resp. $v$) is called the {\em starting}  point (resp. the {\em ending} point).
An {\em  oriented} graph is a  directed  graph  having no bidirected edges (i.e. each pair of vertices is joined by a single edge having a unique direction).
In other words an oriented graph $D$ is a simple graph $G$ together with an orientation
of its edges. We call $G$ the underlying graph of $D$.

Every concept that is valid for graphs automatically applies to digraphs too.
For example, the degree of a vertex $x$ in a digraph $D$, denoted by $d(x)$, is simply the degree of $x$ in
$G$. Likewise, a digraph is said to be connected if
its underlying graph is connected. A digraph $H$ is called an induced subgraph of digraph $D$  if  $V(H)\subseteq V(D)$,
and for any $x,y\in V(H)$,  $xy$ is an edge in $H$ if and only if  $xy$ is an edge in $D$.
For  $P\subset V(D)$, we  denote
$D\setminus P$ the induced subgraph of $D$ obtained by removing the vertices in $P$ and the
edges incident to these vertices. If $P=\{x\}$ consists of a single element, then we write $D\setminus x$ for $D\setminus \{x\}$.
For $W\subseteq E(D)$, we define $D\setminus W$ to be the subgraph of $D$ with all edges  in $W$  deleted (but its vertices remained).
When $W=\{e\}$ consists of a single edge, we write $D\setminus e$ instead of $D\setminus \{e\}$.
An {\em  oriented}  path or {\em  oriented}  cycle  is an orientation of a
path or cycle in which each vertex dominates its successor in the sequence.
An {\em  oriented} acyclic graph is a simple digraph without oriented cycles.
An {\em  oriented tree} or {\em polytree} is an  oriented acyclic graph formed by orienting the edges of undirected acyclic graphs.
A  {\em rooted tree} is an oriented tree in which all edges  are oriented  either away from or
towards the root.  Unless specifically stated, a rooted tree in this article
 is an oriented tree in which all edges  are oriented away from  the root.
An {\em oriented forest} is a disjoint union of oriented trees. A {\em rooted forest} is a disjoint union of rooted trees.

 A vertex-weighted oriented graph is a triplet $D=(V(D), E(D),w)$, where $V(D)$ is the  vertex set,
$E(D)$ is the edge set and $w$ is a weight function $w: V(D)\rightarrow \mathbb{N}^{+}$, where $N^{+}=\{1,2,\ldots\}$.
Some times for short we denote the vertex set $V(D)$ and edge set $E(D)$
by $V$ and $E$ respectively.
The weight of $x_i\in V$ is $w(x_i)$, denoted by $w_i$  or $w_{x_i}$.
Given  a  vertex-weighted oriented graph $D=(V,E,w)$ with the vertex set $V=\{x_{1},\ldots,x_{n}\}$, we  consider the polynomial
 ring $S=k[x_{1},\dots, x_{n}]$ in $n$ variables over a field $k$. The edge ideal of $D$,  denoted by $I(D)$, is the ideal of $S$ given by
\[I(D)=(x_ix_j^{w_j}\mid  x_ix_j\in E).\]

If a vertex $x_i$ of $D$ is a source (i.e., has only arrows leaving $x_i$) we shall always
assume $w_i=1$ because in this case the definition of $I(D)$ does not depend on the
weight of $x_i$.

For any homogeneous ideal $I$ of the polynomial ring  $S=k[x_{1},\dots,x_{n}]$, there exists a {\em graded
minimal finite free resolution}

\[0\rightarrow \bigoplus\limits_{j}S(-j)^{\beta_{p,j}(I)}\rightarrow \bigoplus\limits_{j}S(-j)^{\beta_{p-1,j}(I)}\rightarrow \cdots\rightarrow \bigoplus\limits_{j}S(-j)^{\beta_{0,j}(I)}\rightarrow I\rightarrow 0,\]
where the maps are exact, $p\leq n$, and $S(-j)$ is the $S$-module obtained by shifting
the degrees of $S$ by $j$. The number
$\beta_{i,j}(I)$, the $(i,j)$-th graded Betti number of $I$, is
an invariant of $I$ that equals the number of minimal generators of degree $j$ in the
$i$th syzygy module of $I$.
Of particular interests are the following invariants which measure the “size” of the minimal graded
free resolution of $I$.
The projective dimension of $I$, denoted pd\,$(I)$, is defined to be
\[\mbox{pd}\,(I):=\mbox{max}\,\{i\ |\ \beta_{i,j}(I)\neq 0\}.\]
The regularity of $I$, denoted $\mbox{reg}\,(I)$, is defined by
\[\mbox{reg}\,(I):=\mbox{max}\,\{j-i\ |\ \beta_{i,j}(I)\neq 0\}.\]

We now derive some formulas for $\mbox{pd}\,(I)$  and $\mbox{reg}\,(I)$ in some special cases by using some
tools developed in \cite{FHT}.

\begin{Definition} \label{bettispliting}Let $I$  be a monomial ideal, and suppose that there exist  monomial
ideals $J$ and $K$ such that $\mathcal{G}(I)$ is the disjoint union of $\mathcal{G}(J)$ and $\mathcal{G}(K)$, where $\mathcal{G}(I)$ denotes the unique minimal set of monomial generators of $I$. Then $I=J+K$
is a {\em Betti splitting} if
\[\beta_{i,j}(I)=\beta_{i,j}(J)+\beta_{i,j}(K)+\beta_{i-1,j}(J\cap K)\hspace{2mm}\mbox{for all}\hspace{2mm}i,j\geq 0,\]
where $\beta_{i-1,j}(J\cap K)=0\hspace{2mm}  \mbox{if}\hspace{2mm} i=0$.
\end{Definition}

In  \cite{FHT}, the authors describe some sufficient conditions for an
ideal $I$ to have a Betti splitting. We need  the following lemma.

\begin{Lemma}\label{lem:1}{\em (\cite[Corollary 2.7]{FHT})}
Suppose that $I=J+K$ where $\mathcal{G}(J)$ contains all
the generators of $I$ divisible by some variable $x_{i}$ and $\mathcal{G}(K)$ is a nonempty set containing
the remaining generators of $I$. If $J$ has a linear resolution, then $I=J+K$ is a Betti
splitting.
\end{Lemma}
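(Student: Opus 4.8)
The plan is to prove the Betti-splitting identity
\[\beta_{n,m}(I)=\beta_{n,m}(J)+\beta_{n,m}(K)+\beta_{n-1,m}(J\cap K)\]
for all $n,m$ by analysing the long exact sequence in $\Tor^S(-,k)$ attached to the Mayer--Vietoris-type short exact sequence
\[0\longrightarrow J\cap K\xrightarrow{\ f\mapsto(f,-f)\ }J\oplus K\xrightarrow{\ (g,h)\mapsto g+h\ }I\longrightarrow 0.\]
Writing $\psi_n\colon\Tor_n^S(J\cap K,k)\to\Tor_n^S(J,k)\oplus\Tor_n^S(K,k)$ for the induced map, a degree-by-degree dimension count in this long exact sequence (an induction on $n$ comparing $\dim_k\ker(\psi_n)_m$ with $\beta_{n,m}(J\cap K)$) shows that the displayed identity holds for all $n,m$ exactly when every $\psi_n$ vanishes, equivalently when both inclusion-induced maps $\Tor_n^S(J\cap K,k)\to\Tor_n^S(J,k)$ and $\Tor_n^S(J\cap K,k)\to\Tor_n^S(K,k)$ are zero. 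This reduction is essentially the general splitting criterion of \cite{FHT}; granting it, it remains only to verify the two vanishings.

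For the map to $\Tor_n^S(J,k)$ I would invoke the hypothesis that $J$ has a linear resolution: then all minimal generators of $J$ have one and the same degree $d$, so $\Tor_n^S(J,k)_m=0$ unless $m=n+d$. On the other hand $J\cap K$ is generated by the monomials $\lcm(u,v)$ with $u\in\mathcal{G}(J)$ and $v\in\mathcal{G}(K)$; since $\mathcal{G}(J)$ and $\mathcal{G}(K)$ are disjoint parts of the minimal generating set $\mathcal{G}(I)$, no $v$ can divide any $u$, hence $u$ properly divides $\lcm(u,v)$ and $\deg\lcm(u,v)\geq d+1$. Thus $J\cap K$ is generated in degrees $\geq d+1$, which forces $\Tor_n^S(J\cap K,k)_m=0$ for $m<n+d+1$. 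Consequently $\Tor_n^S(J\cap K,k)_m$ and $\Tor_n^S(J,k)_m$ are never simultaneously nonzero, so the induced map is identically zero.

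For the map to $\Tor_n^S(K,k)$ I would use the variable $x_i$. Since $\mathcal{G}(K)$ contains no generator of $I$ divisible by $x_i$, the element $x_i$ is a nonzerodivisor on $S/K$, and therefore $(x_i)\cap K=x_iK$. Every minimal generator $\lcm(u,v)$ of $J\cap K$ is divisible by $x_i$ because $u\in\mathcal{G}(J)$ is, so $J\cap K\subseteq(x_i)\cap K=x_iK$, and the inclusion $J\cap K\hookrightarrow K$ factors as $J\cap K\hookrightarrow x_iK\hookrightarrow K$. But the composite $K\xrightarrow{\,\sim\,}x_iK\hookrightarrow K$ is just multiplication by $x_i$, which acts as $0$ on the $S/\mathfrak{m}$-module $\Tor_n^S(K,k)$; as the first arrow is an isomorphism of $S$-modules, $\Tor_n^S(x_iK\hookrightarrow K)=0$, and hence the composite $\Tor_n^S(J\cap K,k)\to\Tor_n^S(K,k)$ is zero as well. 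Combining the last two paragraphs yields $\psi_n=0$ for every $n$, so $I=J+K$ is a Betti splitting.

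The step I expect to require the most care is the reduction in the first paragraph, i.e.\ making the equivalence ``Betti splitting $\iff$ all $\psi_n=0$'' precise degree by degree; everything after that is elementary bookkeeping with monomial generators together with the single homological fact that an element of the maximal ideal annihilates $\Tor_\bullet^S(-,k)$.
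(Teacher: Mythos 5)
Your proof is correct and takes essentially the same route as the source the paper cites for this lemma (\cite[Corollary 2.7]{FHT}); the paper itself gives no proof, and the standard argument is exactly yours: reduce via the Mayer--Vietoris long exact sequence to showing both maps $\Tor_n^S(J\cap K,k)\to\Tor_n^S(J,k)$ and $\Tor_n^S(J\cap K,k)\to\Tor_n^S(K,k)$ vanish, kill the first by comparing the linear strand of $J$ with the fact that $J\cap K$ is generated in degrees at least $d+1$, and kill the second by factoring the inclusion through $x_iK$ so that it becomes multiplication by $x_i$ on $\Tor$. The only remark worth adding is that you need just the easy direction of the equivalence in your first paragraph, which requires no induction: once $\psi_n=\psi_{n-1}=0$ the long exact sequence breaks into short exact sequences giving the Betti-number identity directly.
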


When $I$ is a Betti splitting ideal, Definition \ref{bettispliting} implies the following results:
\begin{Corollary} \label{cor:1}
If $I=J+K$ is a Betti splitting ideal, then
\begin{enumerate}[1]
 \item $\mbox{reg}\,(I)=\mbox{max}\{\mbox{reg}\,(J),\mbox{reg}\,(K),\mbox{reg}\,(J\cap K)-1\}$,
 \item $\mbox{pd}\,(I)=\mbox{max}\{\mbox{pd}\,(J),\mbox{pd}\,(K),\mbox{pd}\,(J\cap K)+1\}$.
\end{enumerate}
\end{Corollary}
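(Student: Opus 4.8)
The plan is to read off both formulas directly from the defining identity of a Betti splitting, using only the fact that graded Betti numbers are non-negative integers. For a Betti splitting $I=J+K$ one has, by Definition \ref{bettispliting},
\[
\beta_{i,j}(I)=\beta_{i,j}(J)+\beta_{i,j}(K)+\beta_{i-1,j}(J\cap K)\qquad\text{for all }i,j\ge 0,
\]
with the convention $\beta_{-1,j}(J\cap K)=0$. Since every term on the right-hand side is a non-negative integer, no cancellation can occur; hence $\beta_{i,j}(I)\neq 0$ if and only if at least one of $\beta_{i,j}(J)$, $\beta_{i,j}(K)$, $\beta_{i-1,j}(J\cap K)$ is nonzero. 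Equivalently, the bigraded support of the minimal free resolution of $I$ is the union of the support coming from $J$, the support coming from $K$, and the support coming from $J\cap K$ shifted by one in the homological degree.

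For the regularity I would then compute $\reg(I)=\max\{\,j-i\mid\beta_{i,j}(I)\neq 0\,\}$ by splitting this maximum over the three pieces of the support. The $J$-part contributes $\max\{\,j-i\mid\beta_{i,j}(J)\neq 0\,\}=\reg(J)$, and likewise the $K$-part contributes $\reg(K)$. For the $J\cap K$-part, a pair $(i,j)$ lies in the support precisely when $\beta_{i-1,j}(J\cap K)\neq 0$; writing $i'=i-1$, the quantity $j-i$ equals $(j-i')-1$, so the maximum over this part is $\max\{\,(j-i')-1\mid\beta_{i',j}(J\cap K)\neq 0\,\}=\reg(J\cap K)-1$. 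Taking the overall maximum yields statement (1).

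For the projective dimension the argument is identical with the homological degree $i$ in place of $j-i$: $\pd(I)$ is the largest $i$ for which $\beta_{i,j}(I)\neq 0$ for some $j$, and by the support description this equals the maximum of the largest such $i$ for $J$ (namely $\pd(J)$), for $K$ (namely $\pd(K)$), and for the shifted copy of $J\cap K$ (namely $\pd(J\cap K)+1$, since $\beta_{i-1,j}(J\cap K)\neq 0$ forces $i-1\le\pd(J\cap K)$, i.e.\ $i\le\pd(J\cap K)+1$, and this bound is attained). This gives statement (2).

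There is essentially no serious obstacle. The two points requiring care are that the identity in Definition \ref{bettispliting} must hold for all $i,j$ simultaneously, so that passing to maxima over the full bigraded support is legitimate, and the bookkeeping of the homological shift by one in the $J\cap K$ term, which is exactly what produces the $-1$ in (1) and the $+1$ in (2). The non-negativity of Betti numbers is what guarantees that the support of the sum is the union of the supports, so that no nonzero Betti number of $I$ is masked by cancellation.
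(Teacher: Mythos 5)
Your proof is correct and is exactly the argument the paper has in mind: the paper states this corollary without proof as an immediate consequence of Definition \ref{bettispliting}, and the intended reasoning is precisely your observation that non-negativity of Betti numbers prevents cancellation, so the bigraded support of $I$ is the union of those of $J$, $K$, and the homologically shifted copy of $J\cap K$, after which both formulas follow by taking maxima. The bookkeeping of the shift (giving the $-1$ in the regularity and the $+1$ in the projective dimension) is handled correctly.
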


The following lemmas is often used in this article.
\begin{Lemma}
\label{lem:2}{\em (\cite[Lemma 1.3]{HTT}) }
Let $R$ be a polynomial ring over a field and let $I$ be a proper non-zero homogeneous
ideal in $R$. Then
\begin{enumerate}[1]
\item $\mbox{pd}\,(I)=\mbox{pd}\,(S/I)-1$,
\item $\mbox{reg}\, (I)=\mbox{reg}\,(S/I)+1$.
\end{enumerate}
\end{Lemma}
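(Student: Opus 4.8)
Since this is the familiar resolution-splicing lemma, the plan is simply to compare minimal graded free resolutions across the short exact sequence of graded $R$-modules
\[
0 \longrightarrow I \longrightarrow R \longrightarrow R/I \longrightarrow 0 .
\]
First I would fix a minimal graded free resolution of $I$,
\[
0 \longrightarrow F_p \longrightarrow \cdots \longrightarrow F_1 \longrightarrow F_0 \xrightarrow{\ \varepsilon\ } I \longrightarrow 0, \qquad p = \pd(I),
\]
and compose the augmentation $\varepsilon$ with the inclusion $I \hookrightarrow R$. Splicing with the displayed short exact sequence produces an exact complex of free modules
\[
0 \longrightarrow F_p \longrightarrow \cdots \longrightarrow F_1 \longrightarrow F_0 \longrightarrow R \longrightarrow R/I \longrightarrow 0 ,
\]
that is, a graded free resolution of $R/I$ of length $p+1$.

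The key step — essentially the only one requiring an argument — is that this resolution is still minimal, i.e. every differential has all entries in the graded maximal ideal $\mathfrak{m}$ of $R$. For the maps among the $F_i$ this is inherited from minimality of $F_\bullet$; for the one new map $F_0 \to R$ it holds because its image equals $I$ and $I \subseteq \mathfrak{m}$, which is precisely where the hypothesis that $I$ is a proper ideal enters. Hence $\beta_{0,0}(R/I) = 1$, $\beta_{0,j}(R/I) = 0$ for $j \neq 0$, and $\beta_{i,j}(R/I) = \beta_{i-1,j}(I)$ for all $i \geq 1$ and all $j$.

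From this Betti dictionary both parts drop out. For (1), taking the maximum over $i$ and using $I \neq 0$, the shifted range contributes $\pd(I)+1$, which dominates the contribution $0$ from $\beta_{0,0}(R/I)$, so $\pd(R/I) = \pd(I)+1$. For (2), reindexing $i \mapsto i-1$ gives
\[
\reg(R/I) = \max\bigl\{\, j-i \ :\ \beta_{i,j}(R/I) \neq 0 \,\bigr\} = \max\bigl\{\, 0,\ \reg(I) - 1 \,\bigr\},
\]
and since a nonzero ideal has minimal generators of positive degree we have $\reg(I) \geq 1$, so the right-hand side is $\reg(I) - 1$ and $\reg(R/I) = \reg(I) - 1$. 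Rearranging the two equalities gives $\pd(I) = \pd(R/I) - 1$ and $\reg(I) = \reg(R/I) + 1$. I do not expect any real obstacle; the only subtleties are that properness of $I$ is what keeps the spliced resolution minimal, and nonvanishing of $I$ is what makes $\pd(I)$ and $\reg(I)$ finite and the reindexing legitimate.
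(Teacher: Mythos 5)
Your proof is correct and complete: the paper itself gives no argument for this lemma, citing it directly from \cite[Lemma 1.3]{HTT}, and your splicing of a minimal free resolution of $I$ with $0\to I\to R\to R/I\to 0$ is the standard proof, with the two hypotheses (properness for minimality of the map $F_0\to R$, nonvanishing for $\pd(R/I)\geq 1$ and $\reg(I)\geq 1$) used in exactly the right places. The only cosmetic remark is that the statement's mixing of $R$ and $S$ is a typo in the paper, not something your argument needs to resolve.
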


\begin{Lemma}
\label{lem:3}{\em (\cite[Lemma 2.2 and  Lemma 3.2 ]{HT1})}
Let $S_{1}=k[x_{1},\dots,x_{m}]$, $S_{2}=k[x_{m+1},\dots,x_{n}]$ and $S=k[x_{1},\dots,x_{n}]$ be three polynomial rings, $I\subseteq S_{1}$ and
$J\subseteq S_{2}$ be two proper non-zero homogeneous  ideals.  Then
\begin{enumerate}[1]
\item $\mbox{pd}\,(S/(I+J))=\mbox{pd}\,(S_{1}/I)+\mbox{pd}\,(S_{2}/J)$,
\item $\mbox{reg}\,(S/(I+J))=\mbox{reg}\,(S_{1}/I)+\mbox{reg}\,(S_{2}/J)$.
\end{enumerate}
\end{Lemma}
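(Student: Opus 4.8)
The plan is to exploit the fact that, because $I$ and $J$ involve disjoint sets of variables, the quotient $S/(I+J)$ is an external tensor product of the two smaller quotients. Since $S=S_{1}\otimes_{k}S_{2}$ as graded $k$-algebras and $I+J$ denotes the ideal $IS+JS$, there is a natural isomorphism of graded $S$-modules
\[S/(I+J)\;\cong\;(S_{1}/I)\otimes_{k}(S_{2}/J).\]
First I would record this identification, using that $(S_{1}\otimes_{k}S_{2})/(I\otimes_{k}S_{2}+S_{1}\otimes_{k}J)=(S_{1}/I)\otimes_{k}(S_{2}/J)$, with the degree of a pure tensor being the sum of the degrees of its factors.

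Next I would take minimal graded free resolutions $F_{\bullet}\to S_{1}/I\to 0$ over $S_{1}$ and $G_{\bullet}\to S_{2}/J\to 0$ over $S_{2}$, where $F_{i}=\bigoplus S_{1}(-a)$ and $G_{j}=\bigoplus S_{2}(-b)$. Since $S_{1}(-a)\otimes_{k}S_{2}(-b)\cong S(-a-b)$ as graded $S$-modules, the total complex $\tot(F_{\bullet}\otimes_{k}G_{\bullet})$, whose degree-$n$ term is $\bigoplus_{i+j=n}F_{i}\otimes_{k}G_{j}$, is a complex of free $S$-modules. Because $k$ is a field, $-\otimes_{k}-$ is exact, so the K\"unneth formula for complexes of $k$-vector spaces gives $H_{n}(F_{\bullet}\otimes_{k}G_{\bullet})\cong\bigoplus_{i+j=n}H_{i}(F_{\bullet})\otimes_{k}H_{j}(G_{\bullet})$; as $F_{\bullet}$ and $G_{\bullet}$ are resolutions, this vanishes for $n>0$ and equals $(S_{1}/I)\otimes_{k}(S_{2}/J)=S/(I+J)$ for $n=0$. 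Hence $\tot(F_{\bullet}\otimes_{k}G_{\bullet})$ is a graded free resolution of $S/(I+J)$ over $S$.

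The step that makes the Betti numbers multiply cleanly, and the only real subtlety, is minimality. The total differential is $d_{F}\otimes 1\pm 1\otimes d_{G}$; since $F_{\bullet}$ and $G_{\bullet}$ are minimal, the entries of $d_{F}$ lie in $\mm_{1}=(x_{1},\dots,x_{m})$ and those of $d_{G}$ in $\mm_{2}=(x_{m+1},\dots,x_{n})$, so every entry of the total differential lies in $\mm=\mm_{1}S+\mm_{2}S=(x_{1},\dots,x_{n})$. A graded free complex whose differential has all entries in the irrelevant maximal ideal is minimal, so $\tot(F_{\bullet}\otimes_{k}G_{\bullet})$ is the minimal graded free resolution of $S/(I+J)$, and reading off Betti numbers yields
\[\beta_{n,d}^{S}(S/(I+J))=\sum_{\substack{i+j=n\\ a+b=d}}\beta_{i,a}^{S_{1}}(S_{1}/I)\,\beta_{j,b}^{S_{2}}(S_{2}/J).\]
(Equivalently, one can bypass minimality and obtain the same formula from the graded K\"unneth isomorphism $\Tor^{S}_{n}(S/(I+J),k)\cong\bigoplus_{i+j=n}\Tor^{S_{1}}_{i}(S_{1}/I,k)\otimes_{k}\Tor^{S_{2}}_{j}(S_{2}/J,k)$.) I expect this minimality check (or the careful setup of the graded K\"unneth isomorphism) to be the main obstacle.

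Finally the two invariants drop out of the product formula by elementary extremal arguments, using crucially that all Betti numbers are nonnegative integers, so the displayed sum is nonzero exactly when at least one summand is nonzero. For the projective dimension, every nonzero term has homological index $n=i+j\leq\mbox{pd}\,(S_{1}/I)+\mbox{pd}\,(S_{2}/J)$, while choosing $i$ and $j$ to be these top homological degrees produces a nonzero term; hence $\mbox{pd}\,(S/(I+J))=\mbox{pd}\,(S_{1}/I)+\mbox{pd}\,(S_{2}/J)$, giving part (1). For the regularity, each contributing $(n,d)=(i+j,a+b)$ satisfies $d-n=(a-i)+(b-j)\leq\mbox{reg}\,(S_{1}/I)+\mbox{reg}\,(S_{2}/J)$, and choosing the extremal pairs achieving each regularity independently produces a nonzero term attaining equality; hence $\mbox{reg}\,(S/(I+J))=\mbox{reg}\,(S_{1}/I)+\mbox{reg}\,(S_{2}/J)$, giving part (2). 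The hypotheses that $I,J$ be proper and nonzero only serve to keep the quotients and their resolutions nontrivial, and do not affect the structure of the argument.
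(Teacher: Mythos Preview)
Your argument is correct and is the standard proof of this fact: tensor the two minimal graded free resolutions over $k$, observe via K\"unneth that the total complex resolves $(S_{1}/I)\otimes_{k}(S_{2}/J)\cong S/(I+J)$, check minimality from the differentials landing in the irrelevant ideal, and read off the multiplicativity of Betti numbers, from which the additivity of $\mbox{pd}$ and $\mbox{reg}$ follows by the extremal reasoning you give.

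There is nothing to compare against in the paper itself: the lemma is quoted from \cite{HT1} without proof, so the paper offers no alternative argument. Your tensor-product approach is exactly what underlies the cited result, and the minimality check you flagged as the only subtlety is handled correctly.
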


From Lemma \ref{lem:2} and Lemma \ref{lem:3}, we have
\begin{Lemma}
\label{lem:4}{\em (\cite[Lemma 3.1]{Z1})}
Let $S_{1}\!\!=\!k[x_{1},\dots,x_{m}]$ and $S_{2}\!=\!k[x_{m+1},\dots,x_{n}]$ be two polynomial rings, $I\subseteq S_{1}$ and
$J\subseteq S_{2}$ be two non-zero homogeneous  ideals. Then
\begin{enumerate}[1]
\item $\mbox{pd}\,(I+J)=\mbox{pd}\,(I)+\mbox{pd}\,(J)+1$,
\item $\mbox{reg}\,(I+J)=\mbox{reg}\,(I)+\mbox{reg}\,(J)-1$.
\end{enumerate}
\end{Lemma}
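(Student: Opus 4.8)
The plan is to prove both identities by composing Lemma~\ref{lem:2} and Lemma~\ref{lem:3}: I first pass from the ideal $I+J$ to the quotient $S/(I+J)$, then split the quotient across the two disjoint variable sets, and finally pass back from the quotients $S_1/I$ and $S_2/J$ to the ideals $I$ and $J$. The ideals $I$ and $J$ are non-zero and (as is needed to invoke Lemma~\ref{lem:2}, and as holds in all intended applications) proper; since their generators involve only the variables of $S_1$ and $S_2$ respectively, $I+J$ is again a proper non-zero homogeneous ideal of $S$, so every application of Lemma~\ref{lem:2} below is legitimate.

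First I would apply Lemma~\ref{lem:2} to $I+J$, viewed as an ideal of $S$, to obtain
\[
\pd\,(I+J)=\pd\,(S/(I+J))-1,\qquad \reg\,(I+J)=\reg\,(S/(I+J))+1.
\]
This reduces the problem to computing the invariants of the quotient $S/(I+J)$.

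Next, because $I\subseteq S_1$ and $J\subseteq S_2$ are supported on disjoint sets of variables, Lemma~\ref{lem:3} applies and yields
\[
\pd\,(S/(I+J))=\pd\,(S_1/I)+\pd\,(S_2/J),\qquad \reg\,(S/(I+J))=\reg\,(S_1/I)+\reg\,(S_2/J).
\]
I would then apply Lemma~\ref{lem:2} a second time, now to $I$ inside $S_1$ and to $J$ inside $S_2$, to rewrite $\pd\,(S_1/I)=\pd\,(I)+1$, $\pd\,(S_2/J)=\pd\,(J)+1$, $\reg\,(S_1/I)=\reg\,(I)-1$, and $\reg\,(S_2/J)=\reg\,(J)-1$.

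Substituting these back and tracking the constant shifts gives $\pd\,(I+J)=(\pd\,(I)+1)+(\pd\,(J)+1)-1=\pd\,(I)+\pd\,(J)+1$ and $\reg\,(I+J)=(\reg\,(I)-1)+(\reg\,(J)-1)+1=\reg\,(I)+\reg\,(J)-1$, which are exactly the two claimed formulas. The only point requiring attention is that each use of Lemma~\ref{lem:2} be carried out in the correct ambient ring — over $S_1$ for $I$, over $S_2$ for $J$, and over $S$ for $I+J$; since the projective dimension and regularity of an ideal are unaffected by enlarging the polynomial ring, no inconsistency arises. There is no serious obstacle here: the content is entirely the bookkeeping of the $+1$ and $-1$ shifts, which cancel to produce the stated relations.
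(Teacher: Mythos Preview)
Your proof is correct and follows exactly the approach indicated in the paper: the lemma is stated there as an immediate consequence of Lemma~\ref{lem:2} and Lemma~\ref{lem:3}, and your argument is precisely the routine bookkeeping of the $\pm 1$ shifts that this entails.
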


  Let $\mathcal{G}(I)$ denote the minimal set of generators of a monomial ideal $I\subset S$
 and let $u\in S$ be a monomial, we set $\mbox{supp}(u)=\{x_i: x_i|u\}$. If $\mathcal{G}(I)=\{u_1,\ldots,u_m\}$, we set $\mbox{supp}(I)=\bigcup\limits_{i=1}^{m}\mbox{supp}(u_i)$. The following lemma is well known.
 \begin{Lemma}\label{lem:5}
  Let $I,J\!=(u)$ be two monomial ideals  such that $\mbox{supp}\,(u)\cap \mbox{supp}\,(I)\!\!=\emptyset$. If the degree of monomial $u$  is  $d$. Then
\begin{enumerate}[1]
\item $\mbox{reg}\,(J)=d$,
\item $\mbox{reg}\,(JI)=\mbox{reg}\,(I)+d$,
\item $\mbox{pd}\,(JI)=\mbox{pd}\,(I)$.
\end{enumerate}
\end{Lemma}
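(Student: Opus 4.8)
The plan is to reduce all three statements to one elementary observation: since $\mathcal{G}(J)=\{u\}$ we have $JI=uI$, and multiplication by $u$ identifies $JI$ with $I$ up to a shift in internal degree. Concretely, I would consider the graded $S$-module homomorphism
\[
\mu_u\colon I(-d)\longrightarrow JI=uI,\qquad \mu_u(f)=uf .
\]
It is $S$-linear and surjective by construction; it is homogeneous of degree $0$ because $I(-d)_j=I_{j-d}$ while $\deg(uf)=\deg f+d$; and it is injective because $S$ is an integral domain, so $uf=0$ forces $f=0$. Hence $\mu_u$ is an isomorphism of graded $S$-modules, i.e. $JI\cong I(-d)$.

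Granting this, statements (2) and (3) are immediate. The isomorphism yields $\Tor_i^S(JI,k)_j\cong\Tor_i^S(I,k)_{j-d}$, that is $\beta_{i,j}(JI)=\beta_{i,j-d}(I)$ for all $i,j$, so $\pd(JI)=\max\{\,i\mid \beta_{i,j-d}(I)\neq 0\text{ for some }j\,\}=\pd(I)$, and, writing $j=j'+d$, $\reg(JI)=\max\{\,(j'+d)-i\mid \beta_{i,j'}(I)\neq 0\,\}=\reg(I)+d$. For statement (1) I would argue directly: $J=(u)$ is principal, so the injective map $S(-d)\xrightarrow{\,\cdot u\,}S$ has image $J$, whence $J\cong S(-d)$, $\beta_{0,d}(J)=1$, and all other graded Betti numbers of $J$ vanish; thus $\reg(J)=d-0=d$. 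Equivalently, (1) is the special case $I=S$ of (2) together with $\reg(S)=0$.

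I do not anticipate a genuine obstacle; the only care needed is the bookkeeping of the degree shift in $I(-d)$. It is worth noting that the hypothesis $\supp(u)\cap\supp(I)=\emptyset$ is not actually used in the argument above (only the domain property of $S$ is used); it is included for the later applications, where it additionally guarantees that $\{uu_1,\dots,uu_m\}$ is the minimal monomial generating set of $JI$ whenever $\mathcal{G}(I)=\{u_1,\dots,u_m\}$. If one preferred to stay within the earlier machinery, the disjoint supports would let one write $S\cong k[\supp(u)]\otimes_k k[\supp(I)]\otimes_k(\text{remaining variables})$ and deduce $JI\cong (u)\otimes_k I$, after which a Künneth argument in the spirit of Lemma~\ref{lem:4} gives the same conclusion; but the isomorphism $JI\cong I(-d)$ is shorter and self-contained.
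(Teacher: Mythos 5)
Your proof is correct. Note that the paper offers no proof of this lemma at all --- it is stated with the remark that it ``is well known'' --- so there is nothing in the text to compare against; your argument supplies what the authors omitted. The reduction of all three parts to the graded isomorphism $JI\cong I(-d)$ given by multiplication by $u$ (injective because $S$ is a domain, surjective by definition of $uI$, homogeneous of degree $0$ after the shift) is the standard and cleanest route, and the resulting identity $\beta_{i,j}(JI)=\beta_{i,j-d}(I)$ immediately gives $\mbox{pd}\,(JI)=\mbox{pd}\,(I)$ and $\mbox{reg}\,(JI)=\mbox{reg}\,(I)+d$, with (1) the case $I=S$. Your side remark is also accurate: the hypothesis $\mbox{supp}\,(u)\cap\mbox{supp}\,(I)=\emptyset$ is not needed for these homological conclusions; its role in the paper is to ensure that $\{uu_1,\dots,uu_m\}$ is still the minimal monomial generating set of $JI$, which is what the later Betti-splitting computations (where $J\cap K$ is written as $KL$) actually rely on.
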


\begin{Definition} \label{polarization}
Suppose that $u=x_1^{a_1}\cdots x_n^{a_n}$ is a monomial in $S$. We define the {\it polarization} of $u$ to be
the squarefree monomial
\[\mathcal{P}(u)=x_{11}x_{12}\cdots x_{1a_1} x_{21}\cdots x_{2a_2}\cdots x_{n1}\cdots x_{na_n}\]
in the polynomial ring $S^{\mathcal{P}}=k[x_{ij}\mid 1\leq i\leq n, 1\leq j\leq a_i]$.
If $I\subset S$ is a monomial ideal with $\mathcal{G}(I)=\{u_1,\ldots,u_m\}$,  the  {\it polarization}
of $I$,  denoted by $I^{\mathcal{P}}$, is defined as:
\[I^{\mathcal{P}}=(\mathcal{P}(u_1),\ldots,\mathcal{P}(u_m)),\]
which is a squarefree monomial ideal in the polynomial ring $S^{\mathcal{P}}$.
\end{Definition}

Here is an example of how polarization works.

\begin{Example} \label{exm:1} Let $I(D)=(x_1x_2^{3},x_2x_3^2,x_3x_4^{4},x_4x_1^{5})$ be the edge ideal of a vertex-weighted  oriented cycle $D$, then the polarization of $I(D)$ is the ideal  $I(D)^{\mathcal{P}}=(x_{11}x_{21}x_{22}x_{23},x_{21}x_{31}x_{32},\\
x_{31}x_{41}x_{42}x_{43}x_{44},x_{41}x_{11}x_{12}x_{13}x_{14}x_{15})$.
\end{Example}

A monomial ideal $I$ and its polarization $I^{\mathcal{P}}$ share many homological and
algebraic properties.  The following is a very useful property of polarization.

\begin{Lemma}\label{lem:6}{\em (\cite[Corollary 1.6.3]{HH})}
Let $I\subset S$ be a monomial ideal and $I^{\mathcal{P}}\subset S^{\mathcal{P}}$ its polarization.
Then
\begin{enumerate}[1]
\item $\beta_{ij}(I)=\beta_{ij}(I^{\mathcal{P}})$ for all $i$ and $j$,
\item $\mbox{reg}\,(I)=\mbox{reg}\,(I^{\mathcal{P}})$,
\item $\mbox{pd}\,(I)=\mbox{pd}\,(I^{\mathcal{P}})$.
\end{enumerate}
\end{Lemma}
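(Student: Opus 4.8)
The plan is to derive all three assertions from the single statement of part (1), that $\beta_{ij}(I)=\beta_{ij}(I^{\mathcal{P}})$ for all $i,j$: since $\mbox{reg}$ and $\mbox{pd}$ are read directly off the Betti table, parts (2) and (3) follow at once from (1). To prove part (1), I would exhibit the passage from $I^{\mathcal{P}}$ back to $I$ as reduction modulo a regular sequence of linear forms, and invoke the fact that such a reduction does not change graded Betti numbers.

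The homological input is standard: if $R$ is a polynomial ring, $M$ a finitely generated graded $R$-module, and $z\in R$ a linear form that is a nonzerodivisor on $M$, then $\beta_{ij}^{R}(M)=\beta_{ij}^{R/zR}(M/zM)$ for all $i,j$. Indeed, tensoring a minimal graded free resolution $F_{\bullet}\to M$ with $R/zR$ gives a complex whose homology is $\Tor_{\bullet}^{R}(M,R/zR)$; resolving $R/zR$ by $0\to R\xrightarrow{\,\cdot z\,}R\to R/zR\to 0$ shows this $\Tor$ equals $M/zM$ in homological degree $0$ and vanishes in higher degrees because $z$ is regular on $M$, so $F_{\bullet}\otimes_{R}R/zR$ is a free resolution of $M/zM$; it stays minimal because the entries of the differentials lie in the graded maximal ideal and remain there after killing $z$. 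Iterating gives the same conclusion for a regular sequence of linear forms.

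It remains to produce the regular sequence. Write $a_i^{\ast}=\max\{\,\text{$x_i$-exponent of }u : u\in\mathcal{G}(I)\,\}$, so $S^{\mathcal{P}}=k[x_{ij}\mid 1\le i\le n,\ 1\le j\le a_i^{\ast}]$ with $x_{i1}=x_i$, and put $\theta=\{\,x_{ij}-x_{i1}\mid 1\le i\le n,\ 2\le j\le a_i^{\ast}\,\}$. The substitution $x_{ij}\mapsto x_i$ identifies $S^{\mathcal{P}}/(\theta)$ with $S$ and carries each $\mathcal{P}(u)$ to $u$, hence carries $I^{\mathcal{P}}$ onto $I$. The key claim is that $\theta$ is a regular sequence on $S^{\mathcal{P}}/I^{\mathcal{P}}$; granting it, the sequence $0\to I^{\mathcal{P}}\to S^{\mathcal{P}}\to S^{\mathcal{P}}/I^{\mathcal{P}}\to 0$ shows $\theta$ is regular on $I^{\mathcal{P}}$ as well, with $I^{\mathcal{P}}/(\theta)I^{\mathcal{P}}\cong I$, and the homological input then yields $\beta_{ij}(I^{\mathcal{P}})=\beta_{ij}(I)$. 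To prove the claim I would build $I^{\mathcal{P}}$ from $I$ by a chain of monomial ideals, each step a partial polarization that replaces every minimal generator $x^{b}v$ attaining the current maximal exponent $b\ge 2$ of some variable $x$ (with $x\nmid v$) by $x^{b-1}zv$ for one fresh variable $z$. Substituting $z=x$ undoes the step, so it suffices that $z-x$ be a nonzerodivisor modulo the enlarged ideal $\tilde I$. For this, note that if $(z-x)w\in\tilde I$ then, $\tilde I$ being a monomial ideal, both monomials $zw$ and $xw$ lie in $\tilde I$; thus it is enough to verify the colon identity $(\tilde I:z)\cap(\tilde I:x)=\tilde I$, which comes down to a short case analysis using that $z$ occurs squarefree in every generator of $\tilde I$ and that no generator of $\tilde I$ attains $x$-exponent $b$.

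The main obstacle is precisely this last point — organizing the partial polarizations so that each introduces exactly one variable, so that the chain terminates at $I^{\mathcal{P}}$, and so that the colon identity (equivalently, the nonzerodivisor property) holds at every stage; this is where the combinatorial structure of the construction is genuinely used. Everything else is formal homological algebra, and, as noted, parts (2) and (3) are immediate from part (1).
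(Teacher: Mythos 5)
The paper offers no proof of this lemma---it is quoted directly from \cite[Corollary 1.6.3]{HH}---and your outline is essentially the standard argument given in that source: realize $S/I$ as $S^{\mathcal{P}}/I^{\mathcal{P}}$ modulo the regular sequence of linear forms $x_{ij}-x_{i1}$, use that reduction modulo a linear nonzerodivisor preserves graded Betti numbers, and read off (2) and (3) from the Betti table, so the proposal is correct in approach. The one step stated more loosely than it should be is the passage from the colon identity $(\tilde I:z)\cap(\tilde I:x)=\tilde I$ to the nonzerodivisor property of $z-x$: your justification only treats monomial annihilators, and to handle arbitrary polynomial annihilators (where terms of $zf$ and $xf$ may cancel) one should add that the zerodivisors modulo a monomial ideal form the union of its associated primes, each of which is a monomial prime of the form $(\tilde I:m)$ for a monomial $m$ and contains $z-x$ only if it contains both $z$ and $x$---after which your reduction is indeed valid.
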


The following lemma can be used for computing the projective dimension and the
regularity of an ideal.
\begin{Lemma}
\label{lem:7}{\em (\cite[Lemma 1.1 and Lemma 1.2]{HTT})}  Let\ \ $0\rightarrow A \rightarrow  B \rightarrow  C \rightarrow 0$\ \  be a short exact sequence of finitely generated graded $S$-modules.
Then
\begin{enumerate}[1]
\item $\mbox{reg}\,(B)\leq \mbox{max}\,\{\mbox{reg}\,(A), \mbox{reg}\,(C)\}$,
\item $\mbox{pd}\,(B)\leq \mbox{max}\,\{\mbox{pd}\,(A), \mbox{pd}\,(C)\}$.
\end{enumerate}
\end{Lemma}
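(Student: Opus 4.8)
The plan is to derive both inequalities from the long exact sequence in $\Tor$ obtained by tensoring the given short exact sequence with the residue field $k=S/\mm$, together with the standard homological descriptions of the two invariants. Recall that for a finitely generated graded $S$-module $M$ one has $\beta_{i,j}(M)=\dim_k\Tor_i^S(M,k)_j$, and therefore
\[
\mbox{reg}\,(M)=\max\{\,j-i \mid \Tor_i^S(M,k)_j\neq 0\,\},\qquad
\mbox{pd}\,(M)=\max\{\,i \mid \Tor_i^S(M,k)_j\neq 0\ \text{for some}\ j\,\}.
\]
Thus it suffices to control, degree by degree, which graded pieces of $\Tor_i^S(B,k)$ can be nonzero in terms of the corresponding pieces for $A$ and $C$.

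First I would apply the functor $-\otimes_S k$ to $0\to A\to B\to C\to 0$. Since all maps are graded (degree-preserving), this yields, in each internal degree $j$, a long exact sequence of finite-dimensional $k$-vector spaces
\[
\cdots\to \Tor_i^S(A,k)_j\to\Tor_i^S(B,k)_j\to\Tor_i^S(C,k)_j\to\Tor_{i-1}^S(A,k)_j\to\cdots.
\]
The key step is exactness at the middle term: suppose $\Tor_i^S(B,k)_j\neq 0$ and let $b$ be a nonzero element. If the image of $b$ under $\Tor_i^S(B,k)_j\to\Tor_i^S(C,k)_j$ is nonzero, then $\Tor_i^S(C,k)_j\neq 0$; otherwise $b$ lies in the kernel of this map, which by exactness equals the image of $\Tor_i^S(A,k)_j\to\Tor_i^S(B,k)_j$, forcing $\Tor_i^S(A,k)_j\neq 0$. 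In either case,
\[
\Tor_i^S(B,k)_j\neq 0\ \Longrightarrow\ \Tor_i^S(A,k)_j\neq 0\ \text{or}\ \Tor_i^S(C,k)_j\neq 0.
\]

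Both conclusions then follow by reading off the relevant indices. For (1), I would choose a pair $(i,j)$ with $\Tor_i^S(B,k)_j\neq 0$ realizing $j-i=\mbox{reg}\,(B)$; the implication above gives $j-i\leq\mbox{reg}\,(A)$ or $j-i\leq\mbox{reg}\,(C)$, whence $\mbox{reg}\,(B)\leq\max\{\mbox{reg}\,(A),\mbox{reg}\,(C)\}$. For (2), I would choose $(i,j)$ with $\Tor_i^S(B,k)_j\neq 0$ realizing $i=\mbox{pd}\,(B)$; the same implication yields $i\leq\mbox{pd}\,(A)$ or $i\leq\mbox{pd}\,(C)$, whence $\mbox{pd}\,(B)\leq\max\{\mbox{pd}\,(A),\mbox{pd}\,(C)\}$.

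I do not expect a serious obstacle, since the statement is a formal consequence of the long exact sequence; the result is essentially bookkeeping once the Tor-interpretation of the invariants is in place. The only points requiring genuine care are that everything must be carried out in the graded category, so that the connecting homomorphisms preserve internal degree and the exactness argument applies separately in each degree $j$, and that the finite generation of $A$, $B$, $C$ guarantees each $\Tor_i^S(-,k)_j$ is finite-dimensional and that the maxima defining $\mbox{reg}$ and $\mbox{pd}$ are actually attained (so that choosing an index-realizing pair is legitimate).
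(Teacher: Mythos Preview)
The paper does not supply a proof of this lemma; it is quoted from the reference \cite{HTT} without argument. Your proof is correct and is precisely the standard one: the long exact sequence in $\Tor$ obtained by tensoring with $k$, read in each internal degree, shows that any nonvanishing $\Tor_i^S(B,k)_j$ forces a nonvanishing $\Tor_i^S(A,k)_j$ or $\Tor_i^S(C,k)_j$, and the two inequalities follow immediately from the $\Tor$-descriptions of $\mbox{reg}$ and $\mbox{pd}$. There is nothing to add or correct.
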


\section{Projective dimension and regularity of  edge ideals of  vertex-weighted oriented  unicyclic graphs}
In this section, we will provide some exact formulas for the projective dimension and the regularity of  the edge ideals of vertex-weighted  unicyclic graphs. Meanwhile, we give some examples to show the projective dimension and the regularity of  the edge
ideals of vertex-weighted oriented unicyclic graphs are related to direction selection and
the assumption that $w(x)\geq 2$  if $d(x)\neq 1$  cannot be dropped.
We shall start from the definition of the union of some digraphs.

\begin{Definition} \label{def:2} Let $D_{i}=(V(D_{i}),E(D_{i}))$  be a digraph with the underlying graph $G_i$
for  $1\leq i\leq k$. If  $e$ is an edge of  $G_j$ for $i_1\leq j\leq i_s$ with $1\leq i_1\leq \cdots\leq i_s\leq k$, then
the direction of edge $e$ is the same in  $D_j$ for $i_1\leq j\leq i_s$. The union of digraphs $D_{1},D_{2},\dots, D_{k}$,  written  as $\bigcup\limits_{i=1}^{k}D_{i}$, is the digraph
with vertex set $\bigcup\limits_{i=1}^{k} V(D_{i})$ and edge set $\bigcup\limits_{i=1}^{k} E(D_{i})$.
\end{Definition}

The following two lemmas is needed to facilitate calculating the projective dimension
and the regularity of the edge ideal of some graphs.

\begin{Lemma}\label{lem:8}{\em (\cite[Theorem 3.5, Theorem 3.3 ]{Z3})}
Let $D=(V(D),E(D),w)$ be a vertex-weighted rooted forest such that   $w(x)\geq 2$  if $d(x)\neq 1$. Then
\begin{enumerate}[1]
\item $\mbox{reg}\,(I(D))=\sum\limits_{x\in V(D)}w(x)-|E(D)|+1$,
\item $\mbox{pd}\,(I(D))=|E(D)|-1$.
\end{enumerate}
\end{Lemma}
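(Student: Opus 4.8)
The plan is to prove the statement by induction on $|E(D)|$, the engine being a Betti splitting at a leaf (Lemma~\ref{lem:1}, Corollary~\ref{cor:1}) together with the ancillary facts of Section~2. Deleting a leaf, or deleting a vertex of degree $\ge 2$, again gives a vertex-weighted rooted forest satisfying the weight hypothesis with strictly fewer edges, so the induction is essentially self-contained; the only new feature is that the colon ideals arising carry pure-power ``decorations'' on roots, so it is cleanest to prove simultaneously, for a rooted forest $D$ obeying the hypothesis and distinct roots $r_1,\dots,r_t$ with exponents $a_1,\dots,a_t\ge 2$,
\[
\reg\Big(I(D)+\sum_{j=1}^{t}(r_j^{a_j})\Big)=\sum_{x\in V(D)}w(x)-|E(D)|+1+\sum_{j=1}^{t}(a_j-2),\qquad \pd\Big(I(D)+\sum_{j=1}^{t}(r_j^{a_j})\Big)=|E(D)|-1+t .
\]
An exponent $1$ arising in the course of the induction is absorbed at once: $(r_j)+I(D)=(r_j)+I(D\setminus r_j)$ splits off $(r_j)$ by Lemma~\ref{lem:4}, decreasing $t$ and $|E(D)|$. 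One may assume $D$ has no isolated vertices; the base case $|E(D)|=1$ is a single edge $uv$ with $u$ a source, so the ideal is principal and $w_u=1$, and both formulas are immediate.

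For the inductive step choose a leaf $v$ of $D$ that is not a root (one exists at maximal depth). Then $v$ is a sink whose only neighbour is its parent $u$, and the sole generator of $I:=I(D)+\sum_j(r_j^{a_j})$ divisible by the variable $v$ is $g=uv^{w_v}$. Hence $I=J+K$ with $J=(g)$ principal and $K=I(D\setminus v)+\sum_j(r_j^{a_j})$; since $J$ has a linear resolution this is a Betti splitting (Lemma~\ref{lem:1}), and by Corollary~\ref{cor:1} it suffices to compute $\reg$ and $\pd$ of $J$, $K$ and $J\cap K$. Here $\reg(J)=1+w_v$ and $\pd(J)=0$; and $D\setminus v$ is a rooted forest still satisfying the hypothesis (only $d(u)$ drops, and if it stays $\ne 1$ then $d_D(u)\ge 3$, so $w_u\ge 2$ anyway) with one fewer edge, so $\reg(K)$ and $\pd(K)$ come from the induction hypothesis.

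The crux is $J\cap K$. Since $J=(g)$ is principal, $J\cap K=g\,(K:g)$; because $v$ divides no generator of $K$, this equals $v^{w_v}\cdot u\cdot(K:u)$, and peeling off the variable $v^{w_v}$ (Lemma~\ref{lem:5}) and then the variable $u$ (multiplication by $u$ being injective, so $u(K:u)\cong(K:u)(-1)$) reduces us to $M:=(K:u)$. Writing $p$ for the parent of $u$ and $c_1,\dots,c_k$ for its remaining children, a direct colon computation gives $M=\bigl(pu^{w_u-1},c_1^{w_{c_1}},\dots,c_k^{w_{c_k}}\bigr)+I(D\setminus\{u,v\})+\sum_j(r_j^{a_j})$. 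Decomposing $D\setminus\{u,v\}$ into the subtrees $T_{c_i}$ rooted at the $c_i$ and the remaining subforest $D''$ (which contains $p$ and the roots $r_j$), and noting that these occupy pairwise disjoint sets of variables, iterated use of Lemma~\ref{lem:4} presents $M$ as the ``direct sum'' of $M_0:=\bigl(pu^{w_u-1}\bigr)+I(D'')+\sum_j(r_j^{a_j})$ and of the $M_i:=\bigl(c_i^{w_{c_i}}\bigr)+I(T_{c_i})$. Now $M_0$ is the edge ideal of $D''$ with the pendant edge $p\to u$ of weight $w_u-1\ge 1$ adjoined, carrying the same decorations, hence covered by the induction hypothesis (fewer edges); and $M_i$ is the principal ideal $\bigl(c_i^{w_{c_i}}\bigr)$ when $c_i$ is a leaf of $D$, and otherwise is $I(T_{c_i})$ decorated at its root $c_i$ (then $c_i$ is internal in $D$, so the hypothesis forces $w_{c_i}\ge 2$), again covered by the induction. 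Substituting the resulting values into $\reg(M)=\reg(M_0)+\sum_i\reg(M_i)-k$, $\pd(M)=\pd(M_0)+\sum_i\pd(M_i)+k$, and then into Corollary~\ref{cor:1}, a bookkeeping count using $|V(D)|=|V(D'')|+2+\sum_i|V(T_{c_i})|$ and $|E(D)|=|E(D'')|+k+2+\sum_i|E(T_{c_i})|$ yields that $\reg(J\cap K)-1$ and $\pd(J\cap K)+1$ equal the asserted values; the induction then closes once one checks that this common value dominates $\reg(J)=1+w_v$ and $\reg(K)$, so that the maximum in Corollary~\ref{cor:1}(1) is attained at the $J\cap K$ term — and this domination is precisely where the hypothesis $w(x)\ge 2$ for internal $x$ is used (without it the regularity would undershoot).

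The step I expect to be the real obstacle is not any single homological estimate but the careful set-up and execution of this decorated induction: keeping straight which summands of $M$ are honest edge ideals and which carry root powers, and disposing of the several degenerate configurations ($u$ a root, so $D''$ is empty; $D''$ edgeless; some $w_{c_i}=1$; some $a_j=1$). An alternative that sidesteps part of the bookkeeping is to polarize throughout (Definition~\ref{polarization}, Lemma~\ref{lem:6}): all the ideals above polarize to squarefree monomial ideals and the decorated pieces can be handled on that side, which is presumably the role of ``polarization'' in the paper's approach.
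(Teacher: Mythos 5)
The first thing to say is that the paper contains no proof of Lemma~\ref{lem:8}: it is imported wholesale from \cite{Z3} (Theorems 3.3 and 3.5 there), so there is no in-paper proof to compare against. Measured against the method of that source and against the paper's own proof of Theorem~\ref{thm:3} (which extends it to unicyclic graphs), your argument has the same skeleton: induct on $|E(D)|$, perform a Betti splitting $I=J+K$ with $J$ the principal ideal generated by the unique generator divisible by a non-root leaf variable $v$ (Lemma~\ref{lem:1}, Corollary~\ref{cor:1}), and reduce $J\cap K=uv^{w_v}\,(K:u)$ to smaller forests via Lemmas~\ref{lem:4} and \ref{lem:5}. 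The genuine difference is how the leftover factors $pu^{w_u-1}$ and $c_i^{w_{c_i}}$ produced by the colon are absorbed: the paper and \cite{Z3} polarize and reinterpret them as pendant edges of an auxiliary weighted forest (this is exactly the role of $L_1$, $L_2$ and $H'$ in the proof of Theorem~\ref{thm:3}), whereas you carry them as pure-power ``decorations'' at roots through a strengthened induction hypothesis $\reg\bigl(I(D)+\sum_j(r_j^{a_j})\bigr)=\sum_x w(x)-|E(D)|+1+\sum_j(a_j-2)$, $\pd=|E(D)|-1+t$. I checked the bookkeeping in the generic case and it closes: one gets $\reg(J\cap K)-1=\sum_x w(x)-|E(D)|+1$ and $\pd(J\cap K)+1=|E(D)|-1$, and these dominate the $J$ and $K$ contributions. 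Your route trades the notational overhead of polarization for a longer list of degenerate configurations to dispose of ($u$ a root, isolated decorated vertices in $D\setminus v$, $D''$ edgeless), which you at least name; both routes are viable.

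One substantive correction. You locate the use of the hypothesis $w(x)\geq 2$ for $d(x)\neq 1$ in the final domination step (``without it the regularity would undershoot''). That is not where it lives: $\reg(J\cap K)-1\geq\reg(J)=1+w_v$ and $\geq\reg(K)$ already follow from $w\geq 1$ alone, since $\sum_{x\neq v}w(x)\geq |V(D)|-1\geq |E(D)|$ for a forest. The hypothesis is load-bearing earlier, in the computation of $(K:u)$ itself: because $u$ has both a parent and the child $v$, $d(u)\geq 2$ forces $w_u\geq 2$, hence $w_u-1\geq 1$, so that $pu^{w_u-1}$ is an honest pendant edge and $M_0$ is again a (decorated) forest edge ideal. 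If $w_u=1$ this generator degenerates to the bare variable $p$, absorbs other generators, and the formula genuinely fails --- e.g.\ for the unweighted oriented path on four vertices one has $\pd(I)=1$ rather than $|E|-1=2$. Your written argument does invoke $w_u-1\geq 1$ at the right moment, so this is a misattribution rather than a gap, but it should be stated correctly, and the same point recurs for the exponents $w_{c_i}\geq 2$ on internal children.
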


\begin{Lemma}\label{lem:9}{\em (\cite[Theorem 4.1]{Z3})}
Let $D=(V(D),E(D),w)$ be  a vertex-weighted oriented cycle such that   $w(x)\geq 2$  for any $x\in V(D)$. Then
\begin{enumerate}[1]
\item $\mbox{reg}\,(I(D))=\sum\limits_{x\in V(D)}w(x)-|E(D)|+1$,
\item $\mbox{pd}\,(I(D))=|E(D)|-1$.
\end{enumerate}
\end{Lemma}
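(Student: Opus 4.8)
The plan is to polarize and then run a Betti splitting that removes one edge, turning the cycle into a path covered by Lemma~\ref{lem:8}. Label the vertices so that $E(D)=\{x_1x_2,x_2x_3,\dots,x_{n-1}x_n,x_nx_1\}$, so $|E(D)|=n\ (\ge 3)$ and $I(D)=(x_1x_2^{w_2},\dots,x_{n-1}x_n^{w_n},x_nx_1^{w_1})$. By Lemma~\ref{lem:6} we may work with $I(D)^{\mathcal P}$, whose minimal generators are $u_i=x_{i,1}x_{i+1,1}x_{i+1,2}\cdots x_{i+1,w_{i+1}}$ for $i=1,\dots,n$ (indices mod $n$); polarizing is exactly what makes Lemma~\ref{lem:1} applicable, since $w_1\ge 2$ forces the variable $x_{1,2}$ to divide $u_n$ and no other generator. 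With $J=(u_n)$ and $K=(u_1,\dots,u_{n-1})$ this yields a Betti splitting $I(D)^{\mathcal P}=J+K$: here $\operatorname{reg} J=w_1+1$ and $\operatorname{pd} J=0$ are immediate, while $K$ is the polarization of the edge ideal of the rooted path $x_1\to x_2\to\cdots\to x_n$, to which Lemma~\ref{lem:8} applies, giving $\operatorname{reg} K=\bigl(1+\sum_{i=2}^n w_i\bigr)-(n-1)+1$ and $\operatorname{pd} K=n-2$. By Corollary~\ref{cor:1} the only remaining term is $J\cap K=u_n\cdot(K:u_n)$. A routine colon/$\operatorname{lcm}$ computation gives $K:u_n=(x_{2,1}\cdots x_{2,w_2},\,u_2,\dots,u_{n-2},\,x_{n-1,1}x_{n,2}\cdots x_{n,w_n})$, whose support is disjoint from $\operatorname{supp}(u_n)$, so by Lemma~\ref{lem:5}, $\operatorname{reg}(J\cap K)=\operatorname{reg}(K:u_n)+(w_1+1)$ and $\operatorname{pd}(J\cap K)=\operatorname{pd}(K:u_n)$. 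Since, after a permutation of variables, $K:u_n$ is the polarization of $L':=(x_2^{w_2},x_2x_3^{w_3},x_3x_4^{w_4},\dots,x_{n-1}x_n^{w_n-1})$, Lemma~\ref{lem:6} reduces everything to computing $\operatorname{reg}(L')$ and $\operatorname{pd}(L')$.

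This last computation is the crux. I would embed $L'$ in the family $N_k:=(x_k^{w_k},\,x_kx_{k+1}^{w_{k+1}},\,x_{k+1}x_{k+2}^{w_{k+2}},\dots,\,x_{n-1}x_n^{w_n-1})$ for $2\le k\le n$ (so $L'=N_2$ and $N_n=(x_n^{w_n-1})$) and prove by downward induction on $k$ that
\[
\operatorname{reg}(N_k)=\sum_{i=k}^n w_i-(n-k)-1,\qquad \operatorname{pd}(N_k)=n-k.
\]
The base case $k=n$ is a principal ideal, handled by Lemma~\ref{lem:5}(1). For $2\le k\le n-1$, write $N_k=(x_k^{w_k})+R_k$ where $R_k=(x_kx_{k+1}^{w_{k+1}},\dots,x_{n-1}x_n^{w_n-1})$ is the edge ideal of a rooted path, so Lemma~\ref{lem:8} supplies $\operatorname{reg} R_k$ and $\operatorname{pd} R_k$; after polarizing, $x_{k,2}$ occurs only in the generator coming from $x_k^{w_k}$ (using $w_k\ge 2$), so Lemma~\ref{lem:1} produces a Betti splitting, and the same support-disjointness argument with Lemma~\ref{lem:5} identifies the intersection term with a monomial of degree $w_k$ times the polarization of $N_{k+1}$. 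Substituting into Corollary~\ref{cor:1} and using $w_i\ge 2$ to verify that the ``intersection'' contribution realizes each maximum gives the two formulas; in particular $\operatorname{reg}(L')=\sum_{i=2}^n w_i-n+1$ and $\operatorname{pd}(L')=n-2$.

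Assembling with Corollary~\ref{cor:1} we get $\operatorname{reg} I(D)=\max\{\,w_1+1,\ \sum_{i=2}^n w_i-n+3,\ \sum_{i=1}^n w_i-n+1\,\}$ and $\operatorname{pd} I(D)=\max\{0,\ n-2,\ n-1\}$. Because $w_i\ge 2$ for all $i$ and $n\ge 3$, the third regularity candidate $\sum_{i=1}^n w_i-n+1$ dominates the other two, so $\operatorname{reg} I(D)=\sum_{x\in V(D)}w(x)-|E(D)|+1$, and likewise $\operatorname{pd} I(D)=n-1=|E(D)|-1$. I expect the main obstacle to be the bookkeeping in the auxiliary induction on the $N_k$ together with checking that the stated terms actually realize the maxima in Corollary~\ref{cor:1}; everything else is a routine combination of polarization, Lemma~\ref{lem:1}, Corollary~\ref{cor:1}, and Lemmas~\ref{lem:5} and~\ref{lem:8}.
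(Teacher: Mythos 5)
Your argument is correct, but note that the paper itself never proves this lemma: it is quoted verbatim from \cite{Z3}*{Theorem 4.1}, so the only in-house comparison point is the proof of Theorem \ref{thm:3}, which treats the more general unicyclic case by exactly the strategy you chose --- polarize, split off the single generator coming from the edge $x_nx_1$ via Lemma \ref{lem:1} (your $J$ and $K$ are the paper's $K$ and $J$, respectively), identify the complementary ideal with a rooted path so that Lemma \ref{lem:8} applies, and assemble with Corollary \ref{cor:1} and Lemma \ref{lem:5}. The one genuine divergence is in the crux step: where you compute $\operatorname{reg}$ and $\operatorname{pd}$ of $L'=(x_2^{w_2},x_2x_3^{w_3},\dots,x_{n-1}x_n^{w_n-1})$ by a downward induction through the auxiliary family $N_k$, performing $n-2$ further Betti splittings, the paper's Theorem \ref{thm:3} handles the analogous colon ideal in one shot by observing that it is already the polarization of the edge ideal of a single vertex-weighted rooted forest: the pure power $x_{2,1}\cdots x_{2,w_2}$ is the polarization of a pendant edge $x_{2,1}x_{2,2}^{w_2-1}$ hanging off the path, and the truncated generator $x_{n-1,1}x_{n,2}\cdots x_{n,w_n}$ of an edge of weight $w_n-1$, so Lemma \ref{lem:8} applies directly (the leaf condition $w(x)\geq 2$ only if $d(x)\neq 1$ is what makes the reduced weights $w_2-1,w_n-1\geq 1$ admissible). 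Your induction reaches the same values $\operatorname{reg}(L')=\sum_{i=2}^{n}w_i-n+1$ and $\operatorname{pd}(L')=n-2$, and your verification that the intersection term realizes both maxima (using $w_i\geq 2$ and $n\geq 3$) is exactly the check the paper performs; your route is longer but more self-contained, while the paper's buys brevity at the cost of leaning harder on the rooted-forest formula.
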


Let $D=(V,E)$ be  a digraph and  $x\in V$, then we call  $N_D^{+}(x)=\{y: xy \in E\}$ and $N_D^{-}(x)=\{y: yx \in E\}$ to be  the out-neighbourhood and  in-neighbourhood of $x$, respectively. The neighbourhood of $x$ is the set $N_D(x)=N_D^{+}(x)\cup N_D^{-}(x)$.

Now we are ready to present the main result of this section.
\begin{Theorem}\label{thm:3}
Let $T_j=(V(T_j),E(T_j),w_j)$ be a vertex-weighted  rooted forest such that $w(x)\geq 2$ if $d(x)\neq 1$ for  $1\leq j\leq s$ and $C_n$ a vertex-weighted oriented
cycle with  vertex set  $\{x_1,x_2,\ldots,x_n\}$ and $w_{x_i}\geq 2$ for   $1\leq i\leq n$. Let $D=C_n\cup(\bigcup\limits_{j=1}^{s}T_j)$ be a vertex-weighted oriented graph obtained by attaching  the root of $T_j$  to vertex $x_{i_j}$ of the cycle $C_{n}$ for $1\leq j\leq s$.
 Then
\begin{enumerate}[1]
\item $\mbox{reg}\,(I(D))=\sum\limits_{x\in V(D)}w(x)-|E(D)|+1$,
\item $\mbox{pd}\,(I(D))=|E(D)|-1$.
\end{enumerate}
\end{Theorem}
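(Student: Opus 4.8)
The plan is to induct on the number $s$ of attached rooted trees, using the edge ideal of the cycle $C_n$ (handled by Lemma~\ref{lem:9}) as the base case $s=0$. Throughout I will pass to polarizations freely, since by Lemma~\ref{lem:6} the projective dimension and regularity are unchanged, and I will repeatedly invoke the ``lower bound from an induced subgraph / upper bound from a short exact sequence'' philosophy, although here the cleanest route is a Betti splitting. Fix one tree, say $T_s$, attached at the vertex $x_{i_s}=:v$ of the cycle, and let $r$ be the root of $T_s$ (so $vr\in E(D)$, oriented away from $v$ since $T_s$ is rooted away from its root and is attached with its root at $v$; note $v$ need not have weight constraints violated, and $r$ has $w(r)\geq 2$ unless $d(r)=1$). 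Consider the leaves of $T_s$, i.e.\ vertices of out-degree zero. Pick a leaf $z$ adjacent to a vertex $y$ with $yz\in E(D)$.

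The key step is to produce a Betti splitting $I(D)=J+K$ where $K=(y z^{w(z)})$ is the single generator coming from the edge $yz$, and $J$ is generated by all the remaining generators of $I(D)$, namely $I(D')$ where $D'=D\setminus z$ (removing the leaf $z$ and the edge $yz$). Here $\mathcal G(K)$ consists of the one generator divisible by the variable $z$ (after choosing $z$ to be a leaf, $z$ appears in no other generator), so to apply Lemma~\ref{lem:1} in the orientation ``$\mathcal G(J)$ contains all generators divisible by $z$'' I instead set $J=(yz^{w(z)})$ and $K=I(D')$; a principal monomial ideal has a linear resolution, hence $I(D)=J+K$ is a Betti splitting by Lemma~\ref{lem:1}. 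Then $J\cap K = (yz^{w(z)})\cdot I(D')$ if $y\notin\supp(I(D'))$, or more generally $J\cap K=(yz^{w(z)})I(D'')$ where $D''=D'\setminus y$ together with the observation that $z$ is disjoint from $\supp(I(D''))$; in either case Lemma~\ref{lem:5} computes $\reg$ and $\pd$ of the intersection in terms of those of an edge ideal of a strictly smaller graph of the same type (a cycle with fewer attached-tree-vertices, or a rooted forest), to which the induction hypothesis, Lemma~\ref{lem:8}, and Lemma~\ref{lem:4} apply.

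With the Betti splitting in hand, I would feed the three ideals $J$, $K=I(D')$, $J\cap K$ into Corollary~\ref{cor:1}. For the regularity: $\reg(J)=1+(w(z)-1)=w(z)$ by Lemma~\ref{lem:5}(1); $\reg(K)=\reg(I(D'))=\sum_{x\in V(D')}w(x)-|E(D')|+1$ by induction (the graph $D'$ is again a cycle with attached rooted forests satisfying the weight hypothesis, since deleting a leaf either shortens a branch or, if $z=r$ was the whole tree $T_s$, decreases $s$ by one); and $\reg(J\cap K)-1 = \big(w(z) + \reg(I(D''))\big) - 1$, where $D''$ is the smaller graph. A direct computation — tracking that $|E(D')|=|E(D)|-1$, $|E(D'')|=|E(D)|-2$, and $V(D'')=V(D)\setminus\{z,y\}$ — shows the maximum of the three quantities equals $\sum_{x\in V(D)}w(x)-|E(D)|+1$, with the dominant term being $\reg(J\cap K)-1$ precisely because every vertex has weight $\geq 2$ along the branch (this is where the hypothesis $w(x)\geq 2$ for $d(x)\neq 1$ is essential: it guarantees the ``intersection'' term wins and the telescoping closes). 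The projective-dimension statement is entirely parallel: $\pd(J)=0$, $\pd(K)=|E(D')|-1=|E(D)|-2$ by induction, and $\pd(J\cap K)+1=\pd(I(D''))+1=(|E(D)|-2-1)+1=|E(D)|-2$ unless the $y$-deletion does not occur, in which case $\pd(J\cap K)+1=\pd(I(D'))+1 = |E(D)|-1$; the maximum is $|E(D)|-1$.

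The main obstacle is the careful bookkeeping around which vertices and edges survive in $J\cap K$: whether $y$ lies in the support of $I(D')$ (equivalently whether $y$ has another incident edge after deleting $z$) changes the shape of the intersection ideal, and one must check in every case that the resulting graph is still of the allowed type (cycle-with-rooted-forests, or pure rooted forest once the cycle is exhausted, to which Lemmas~\ref{lem:8} and~\ref{lem:9} apply) and that the arithmetic of Corollary~\ref{cor:1} yields exactly $\sum w(x)-|E|+1$ and $|E|-1$. A secondary subtlety is the degenerate situation where $T_s$ consists of a single edge or where the chosen leaf is the root $r$ itself attached directly to $v$; there one peels the edge $vr$ and must confirm the weight hypotheses on $C_n$ alone still give the base-case formula via Lemma~\ref{lem:9}. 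Once the inductive step is organized so that each peeling strictly decreases $|E(D)|$ and never violates the weight hypothesis on the remaining graph, the two formulas follow simultaneously.
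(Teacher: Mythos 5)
Your overall strategy (a Betti splitting whose principal summand is the generator of a carefully chosen edge, followed by Corollary \ref{cor:1}) is the same device the paper uses, but you split at a leaf edge of an attached tree and induct on $s$, whereas the paper splits once at the cycle-closing edge $x_nx_1$ (after polarizing) and reduces immediately to the rooted-forest case of Lemma \ref{lem:8}. That difference of route would be fine; the problem is that your computation of $J\cap K$ is wrong, and the error is not mere bookkeeping --- it is exactly the point where the whole argument lives.

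With $J=(yz^{w(z)})$ and $K=I(D')$, $D'=D\setminus z$, the intersection is generated by $\lcm(yz^{w(z)},m)$ over $m\in\mathcal{G}(I(D'))$, which gives
\[
J\cap K=(yz^{w(z)})\cdot\Bigl(I(D'\setminus y)+\bigl(u^{w(u)}:u\in N_{D'}^{+}(y)\bigr)+\bigl(p\,y^{w(y)-1}:p\in N_{D'}^{-}(y)\bigr)\Bigr),
\]
not $(yz^{w(z)})\cdot I(D'')$. The omitted generators --- pure powers of the out-neighbours of $y$ and the in-edges at $y$ with the weight of $y$ reduced by one --- are precisely the terms the paper's proof labours over (its ideal $L=L_1+L_2$, where $L_1$ is the polarized edge ideal of an auxiliary forest $H'$ carrying the reduced weights $w-1$, and $L_2$ collects the pure powers); they are also where the hypothesis $w(x)\geq 2$ enters, since it guarantees the reduced weights remain positive so that $L_1$ is again an edge ideal of an admissible rooted forest. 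Your own arithmetic betrays the gap: in the generic case where $y$ survives in $\supp(I(D'))$ you compute $\max\{\pd(J),\pd(K),\pd(J\cap K)+1\}=\max\{0,\,|E(D)|-2,\,|E(D)|-2\}=|E(D)|-2$, which contradicts the claimed $\pd(I(D))=|E(D)|-1$; the missing generators are what raise $\pd(J\cap K)$ to $|E(D)|-2$ so that the intersection term dominates. The regularity count fails similarly: with your formula one would need $|E(D)|-|E(D'')|=w(y)+1$, which is false in general. To repair the proof you must work with the correct $J\cap K$, decompose it as the paper does into a reduced-weight forest part plus a pure-power part in disjoint variables, and only then apply Lemmas \ref{lem:4}, \ref{lem:5} and \ref{lem:8}.
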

\begin{proof} Let $N_{T_{j}}^{+}(x_{i_j})\!=\!\{y_{j1},\ldots,y_{j,t_j}\}$ for $1\leq j\leq s$.  Assume that $i_1\!<\!i_2\!<\!\cdots\!<\!i_s$. We consider two cases:

 Case (1): If  $T_j$ is a rooted star graph for any $1\leq j\leq s$, then we have
\begin{align*}
I(D) =& (x_1x_2^{w_2},\ldots,x_{n-1}x_n^{w_n},x_{n}x_1^{w_1}, x_{i_1}y_{11}^{w_{y_{11}}},\ldots,x_{i_1}y_{1,t_1}^{w_{y_{1,t_1}}},x_{i_2}y_{21}^{w_{y_{21,}}},\ldots,  x_{i_2}y_{2,t_2}^{w_{y_{2,t_2}}},\ldots,\\
&x_{i_s}y_{s1}^{w_{y_{s1}}},\ldots,x_{i_s}y_{s,t_s}^{w_{y_{s,t_s}}}).
\end{align*}
Case (2):  If there exists some $1\leq j\leq s$ such that $T_j$ is a  rooted  tree but not a star  digraph. We may assume that
 $T_j$ is a  rooted  tree but not a star  digraph for any $1\leq j\leq s$. Because other cases follow the same line of arguments.  In this case, we have
\begin{align*}
I(D)=&(x_1x_2^{w_2}\!,\ldots,x_{n-1}x_n^{w_n}\!,x_{n}x_1^{w_1}\!, x_{i_1}y_{11}^{w_{y_{11}}}\!\!\!,\ldots,x_{i_1}y_{1,t_1}^{w_{y_{1,t_1}}}\!\!\!\!\!, x_{i_2}y_{21}^{w_{y_{21}}}\!\!\!,\ldots,x_{i_2}y_{2,t_2}^{w_{y_{2,t_2}}}\!\!\!\!, \\
& \ldots, x_{i_s}y_{s1}^{w_{y_{s1}}}\!\!\!,\ldots,x_{i_s}y_{s,t_s}^{w_{y_{s,t_s}}})+
I(T_{1}\setminus x_{i_1})+I(T_{2}\setminus x_{i_2})+\cdots+I(T_{s}\setminus x_{i_s}),
\end{align*}
where $I(T_{j}\setminus x_{i_j})$ is the edge ideal of the  induced subgraph of $T_{j}$ obtained by removing  vertex $x_{i_j}$ and the
edges incident to $x_{i_j}$ for $j=1,\ldots,s$.

Case (1) can be shown by similar arguments as Case (2), so we only prove the statement holds in Case (2).

Let $I(D)^{\mathcal{P}}$ be the polarization of $I(D)$, then
\begin{align*}
I(D)^{\mathcal {P}} = &(x_{11}\!\prod\limits_{j=1}^{w_{2}}x_{2j},\ldots,
x_{n-1,1}\prod\limits_{j=1}^{w_{n}}x_{nj},x_{n1}\prod\limits_{j=1}^{w_{1}}x_{1j},
x_{i_11}\prod\limits_{j=1}^{w_{y_{11}}}y_{11,\,j},\ldots, x_{i_11}\!\!\!\prod\limits_{j=1}^{w_{y_{1,t_1}}}\!\!\!y_{1,t_1,\,j},x_{i_21}\!\!\prod\limits_{j=1}^{w_{y_{21}}}\!\!y_{21,\,j},\\
 &\ldots,x_{i_21}\!\!\prod\limits_{j=1}^{w_{y_{2,t_2}}}\!\!y_{2,t_2,\,j},\ldots,
x_{i_s1}\prod\limits_{j=1}^{w_{y_{s1}}}y_{s1j},\ldots, x_{i_s1}\!\prod\limits_{j=1}^{w_{y_{s,t_s}}}y_{s,t_s,\,j})+I(T_{1}\setminus x_{i_1})^{\mathcal{P}}+I(T_{2}\setminus x_{i_2})^{\mathcal{P}}\\
+&\cdots+I(T_{s}\setminus x_{i_s})^{\mathcal{P}}.
\end{align*}
We set $i_1=1$,  $K=(x_{n1}\!\prod\limits_{j=1}^{w_{1}}x_{1j})$ and
\begin{align*}
J = &(x_{11}\!\!\prod\limits_{j=1}^{w_{2}}x_{2j},\ldots,x_{n-1,1}\!\!\prod\limits_{j=1}^{w_{n}}x_{nj},
\widehat{x_{n1}\!\!\prod\limits_{j=1}^{w_{1}}x_{1j}}, x_{11}\!\!\prod\limits_{j=1}^{w_{y_{11}}}y_{11,\,j},\ldots,\linebreak x_{11}\!\!\prod\limits_{j=1}^{w_{y_{1,t_1}}}y_{1,t_1,\,j}, x_{i_21}\prod\limits_{j=1}^{w_{y_{21}}}y_{21,\,j},\ldots,\\
   &x_{i_21}\prod\limits_{j=1}^{w_{y_{2,t_2}}}\!\!y_{2,t_2,\,j},\ldots,
x_{i_s1}\prod\limits_{j=1}^{w_{y_{s1}}}y_{s1,\,j},\ldots,x_{i_s1}\prod\limits_{j=1}^{w_{y_{s,t_s}}}y_{s,t_s,\,j}) +I(T_{1}\setminus x_{i_1})^{\mathcal{P}}+I(T_{2}\setminus x_{i_2})^{\mathcal{P}}\\
 +&\cdots+I(T_{s}\setminus x_{i_s})^{\mathcal{P}}
\end{align*}
where $\widehat{x_{n1}\!\prod\limits_{j=1}^{w_{1}}x_{1j}}$ denotes the element $x_{n1}\!\prod\limits_{j=1}^{w_{1}}x_{1j}$ being omitted from the ideal $J$.

Note that $J$ is  actually the polarization of the edge ideal $I(D\setminus e)$ of the  subgraph $D\setminus e$ where $e=x_nx_1$,
and $D\setminus e$ is a  rooted tree, whose  root is $x_1$. By Lemmas \ref{lem:6} and \ref{lem:8}, we obtain
\begin{align*}
\mbox{reg}\,(J)&=\mbox{reg}\,(J^{\mathcal {P}})=\sum\limits_{x\in V(D\setminus e)}w(x)-|E(D\setminus e)|+1\\
 &= \sum\limits_{x\in V(D)}w(x)-(w_1-1)-(|E(D)|-1)+1\\
 &= \sum\limits_{x\in V(D)}w(x)-|E(D)|+1+2-w_1\\
 &= \sum\limits_{x\in V(D)}w(x)-|E(D)|+3-w_1
\end{align*}
where third equality holds because we have weighted  one in  vertex $x_1$ in  the  expression $\sum\limits_{x\in V(D\setminus e)}w(x)$,
and
\[\mbox{pd}\,(J)=\mbox{pd}\,(J^{\mathcal {P}})=|E(D\setminus e)|-1=|E(D)|-2.\]

Now, we will compute $\mbox{reg}\,(J\cap K)$ and $\mbox{pd}\,(J\cap K)$.  We distinguish into the following two cases:

(1) If $s=1$, or  $s\geq 2$ and $i_s\neq n$, then $d(x_n)=2$. In this case,  we set $J\cap K=KL$. We  write $L$ as follows:
\[
L=L_1+L_2
\]
  where $L_1=(\prod\limits_{j=1}^{w_{2}}x_{2j},x_{n-1,1}\!\prod\limits_{j=2}^{w_{n}}x_{nj},  \prod\limits_{j=1}^{w_{y_{11}}}y_{11,j},\ldots,\prod\limits_{j=1}^{w_{y_{1k}}}y_{1k,j})+I(D\setminus \{x_1,x_n\})^{\mathcal{P}}$
is the polarization of the edge ideal of a rooted forest $H$,
here $H$  is the union of the induced subgraph  $D\setminus \{x_1,x_n\}$ of $D$ and a vertex-weighted oriented  graph $H'$
with the vertex set $V(H')=\{x_{21},x_{22},x_{n-1,1},x_{n2},y_{11,1},y_{11,2},\ldots,y_{1,k,1},y_{1,k,2}\}$, the  edge  set
 $E(H')=\{x_{21}x_{22},x_{n-1,1}x_{n2},\\
 y_{11,1}y_{11,2},\ldots,y_{1,k,1}y_{1,k,2}\}$
 and  a weight function $w'\!:\!V(H')\rightarrow \mathbb{N}^{+}$ such that
$w'(x_{21})=1$, $w'(x_{22})=w_{2}-1$, $w'(x_{n-1,1})=1$, $w'(x_{n2})=w_{n}-1$,
 $w'(y_{1,j,1})=1$,  $w'(y_{1,j,2})=w_{y_{1,j}}-1\geq 1$ for any $1\leq j\leq k$,
 and $L_2=(\prod\limits_{j=1}^{w_{y_{1,k+1}}}\!\!y_{1,k+1,\,j},
\ldots,\!\!\!\prod\limits_{j=1}^{w_{y_{1,t_1}}}\!\!y_{1,t_1,\,j})$ for any $k+1\leq \ell\leq t_1$.
Thus  $|E(H)|=|E(D)|-(t_1+3)+(k+2)=|E(D)|-(t_1-k+1)$.  We consider the following two cases:

$(i)$ If $k<t_1$,  notice that the variables that appear in  $K$ and  $L$, and  in  $L_1$
and  $L_2$ are different, respectively. Then using Lemmas \ref{lem:4},  \ref{lem:5} and \ref{lem:8}, we obtain
\begin{align*}
\mbox{reg}\,(J\cap K) &= (1+w_1)+\mbox{reg}\,(L)=(1+w_1)+\mbox{reg}\,(L_{1})+\mbox{reg}\,(L_{2})-1\\
 &= (1+w_1)+(\sum\limits_{x\in V(H)}w(x)-|E(H)|+1)\\
 &+ ((\sum\limits_{j=k+1}^{t_1}w_{y_{1j}}-(t_1-k))+1)-1\\
 &= (1+w_1+\sum\limits_{x\in V(H)}w(x)+\sum\limits_{j=k+1}^{t_1}w_{y_{1j}})\\
 &- (|E(D)|-(t_1-k+1))-(t_1-k)+1\\
 &= \sum\limits_{x\in V(D)}w(x)-|E(D)|+2
\end{align*}
where  the last equality holds because of  $x_{n2}\in V(H')$ with $w'(x_{n2})=w_{n}-1$ and $x_1\notin V(H)$, and
\begin{align*}
\mbox{pd}\,(J\cap K) &= \mbox{pd}\,(L)=\mbox{pd}\,(L_{1}+L_{2})=\mbox{pd}\,(L_{1})+\mbox{pd}\,(L_{2})+1\\
 &= |E(H)|-1+(t_1-k-1)+1\\
 &= (|E(D)|-(t_1-k+1))-1+(t_1-k-1)+1\\
 &= |E(D)|-2.
\end{align*}

$(ii)$  If $k=t_1$, then $L_2=(0)$  and $|E(H)|=|E(D)|-(t_1+3)+(t_1+2)=|E(D)|-1$. Again  by Lemmas \ref{lem:4}, \ref{lem:5}  and \ref{lem:8}, we obtain
\begin{align*}
\mbox{reg}\,(J\cap K) &= \mbox{reg}\,(KL)=(1+w_1)+\mbox{reg}\,(L)\\
 &= (1+w_1)+\sum\limits_{x\in V(H)}w(x)-|E(H)|+1\\
 &= (1+w_1+\sum\limits_{x\in V(H)}w(x))-(|E(D)|-1)+1\\
 &= \sum\limits_{x\in V(D)}w(x)-|E(D)|+2
\end{align*}
where the last equality holds because of  $x_{n2}\in V(H')$ with $w'(x_{n2})=w_{n}-1$ and $x_1\notin V(H)$, and
\[
\mbox{pd}\,(J\cap K)=\mbox{pd}\,(KL)=\mbox{pd}\,(L)=|E(H)|-1=|E(D)|-2.
\]
In brief,  no matter  case $(i)$ or $(ii)$, we always have
\begin{align*}
    \mbox{reg}\,(J\cap K) &= \sum\limits_{x\in V(D)}w(x)-|E(D)|+2; \cr
   \mbox{pd}\,(J\cap K) &= |E(D)|-2.
  \end{align*}
Since $K$ has a linear resolution, it follows that $I(D)^{\mathcal{P}}=J+K$ is a Betti splitting. Thus by  Lemma \ref{lem:6} and Corollary \ref{cor:1},
we get
\begin{align*}
\mbox{reg}\,(I(D)) &= \mbox{reg}\,(I(D)^{\mathcal{P}})=\mbox{max}\{\mbox{reg}\,(J),\mbox{reg}\,(K),\mbox{reg}\,(J\cap K)-1\}\\
 &= \mbox{max}\{\!\!\!\!\sum\limits_{x\in V(D)}\!\!\!\!\!w(x)\!-|E(D)|+3\!-w_1,w_1+1,\!\!\!\!\!\sum\limits_{x\in V(D)}\!\!\!\!w(x)\!-|E(D)|+1\}\\
 &= \sum\limits_{x\in V(D)}w(x)-|E(D)|+1,
  \end{align*}
and
\begin{align*}
\mbox{pd}\,(I(D)) &= \mbox{pd}\,(I(D)^{\mathcal{P}})=\mbox{max}\{\mbox{pd}\,(J), \mbox{pd}\,(K), \mbox{pd}\,(J\cap K)+1\}\\
 &= \mbox{max}\{|E(D)|-2,0,|E(D)|-2+1\}\\
 &= |E(D)|-1.
  \end{align*}

(2) If $i_s=n$,  then $d(x_n)>2$.  In this case,  we still set $J\cap K=KL$ and  write $L$ as follows:
\[
L=L_1+L_2
\]
 where $L_1=(\prod\limits_{j=1}^{w_{2}}x_{2j},x_{n-1,1}\!\prod\limits_{j=2}^{w_{n}}x_{nj},  \prod\limits_{j=1}^{w_{y_{11}}}y_{11,\,j},\ldots,\prod\limits_{j=1}^{w_{y_{1k}}}y_{1k,\,j},
 \prod\limits_{j=1}^{w_{y_{s1}}}y_{s1,\,j},\ldots, \prod\limits_{j=1}^{w_{y_{s,\ell}}}y_{s,\ell,\,j})+I(D\setminus\{x_1,x_n\})^{\mathcal{P}}$
 is the polarization of the edge ideal of the graph $H$,  here $H$ is the union of the induced subgraph $D\setminus\{x_1,x_n\}$ of $D$ and the vertex-weighted oriented graph $H'$ with the vertex set $V(H')=\{x_{21},x_{22},x_{n-1},x_{n2},y_{11,1},y_{11,2},\ldots,y_{1k,1},y_{1k,2},
 y_{s1,1}, y_{s1,2},\ldots,y_{s,\ell,1},y_{s,\ell,2}\}$, the edge set
 $E(H')=\{x_{21}x_{22},x_{n-1}x_{n2},y_{11,1}y_{11,2},\ldots,
y_{1k,1}y_{1k,2}, y_{s1,1}y_{s1,2},\ldots, y_{s,\ell,1}y_{s,\ell,2}\}$ and a weight function $w': V(H')\rightarrow \mathbb{N}^{+}$ such that
$w'(x_{21})=w'(x_{n-1,1})=w'(y_{11,1})=\cdots=w'(y_{1k,1})=w'(y_{s1,1})=\cdots=w'(y_{s,\ell,1})=1$, $w'(x_{22})=w_{2}-1,w'(x_{n2})=w_{n}-1,w'(y_{1j,\,2})=w_{y_{1j}}-1\geq 1$ for any $1\leq j\leq k\leq t_1$
and $w'(y_{s,p,2})=w_{y_{s,p}}-1\geq 1$ for any $1\leq p\leq \ell \leq t_s$, and
 $L_2=(\prod\limits_{j=1}^{w_{y_{1,k+1}}}\!\!\!y_{1,k+1,\,j},\ldots,
 \!\prod\limits_{j=1}^{w_{y_{1,t_1}}}\!\!\!y_{1,t_1,\,j}, \prod\limits_{j=1}^{w_{y_{s,\ell+1}}}y_{s,\ell+1,\,j},
\ldots,\!\prod\limits_{j=1}^{w_{y_{s,t_s}}}y_{s,t_s,\,j})$.
In this case, $H$ is a forest and $|E(H)|=|E(D)|-(t_{s}-\ell)-(t_1-k)-1=|E(D)|-(t_{s}-\ell+t_1-k+1)$. We consider the following two cases:

$(i)$ If $k<t_1$ or $\ell<t_s$,  notice again that the variables that appear in  $K$ and  $L$, and  in  $L_1$ and  $L_2$ are different, respectively.  Then using Lemmas \ref{lem:4},  \ref{lem:5}  and \ref{lem:8}, we obtain
\begin{align*}
\mbox{reg}\,(J\cap K) &= (1+w_1)+\mbox{reg}\,(L)=(1+w_1)+\mbox{reg}\,(L_{1})+\mbox{reg}\,(L_{2})-1\\
 &= (1+\!w_1)+(\sum\limits_{x\in V(H)}w(x)-|E(H)|+1)\\
 &+ ((\sum\limits_{j=k+1}^{t_1}w_{y_{1j}}-(t_1-k))+1)-1\\
 &= (1+w_1+\sum\limits_{x\in V(H)}w(x)+\sum\limits_{j=k+1}^{t_1}w_{y_{1j}})-(|E(D)|\\
 &- (t_1-k+1))-(t_1-k)+1\\
 &= \sum\limits_{x\in V(D)}w(x)-|E(D)|+2
  \end{align*}
where the last equality holds because of  $x_{n2}\in V(H')$ with $w'(x_{n2})=w_{n}-1$ and $x_1\notin V(H)$, and
\begin{align*}
\mbox{pd}\,(J\cap K)&= \mbox{pd}\,(L)=\mbox{pd}\,(L_{1}+L_{2})=\mbox{pd}\,(L_{1})+\mbox{pd}\,(L_{2})+1\\
&= |E(H)|\!-\!1+\![\sum\limits_{j=k+1}^{t_1}\!\mbox{pd}\,((y_{1j}^{w_{y_{1j}}}))\!+\!\sum\limits_{j=\ell+1}^{t_s}\!
\mbox{pd}\,((y_{sj}^{w_{y_{sj}}}))\!+\!1]+\!1\\
&= [|E(D)|-(t_{s}-\ell+t_1-k+1)]-1+(t_s-\ell+t_1-k-1)+1\\
&= |E(D)|-2.
  \end{align*}
$(ii)$ If $k=t_1$ and $\ell=t_s$, then $L_2=(0)$ and $|E(H)|=|E(D)|-1$.
From (ii) of case (1), we also have
\begin{align*}
\mbox{reg}\,(J\cap K)&= \sum\limits_{x\in V(D)}w(x)-|E(D)|+2,\\
\mbox{pd}\,(J\cap K)&= |E(D)|-2.
  \end{align*}
Similar arguments as case (1), we obtain
\begin{align*}
\mbox{reg}\,(I(D))&= \sum\limits_{x\in V(D)}w(x)-|E(D)|+1,\\
\mbox{pd}\,(I(D))&= |E(D)|-1.
  \end{align*}
The proof is completed.
\end{proof}

\begin{Theorem}\label{thm:4}
Let $D=(V(D),E(D),w)$ be a  vertex-weighted oriented unicyclic graph such that  $w(x)\geq 2$  if  $d(x)\neq 1$. Then
\begin{enumerate}[1]
\item $\mbox{reg}\,(I(D))=\sum\limits_{x\in V(D)}w(x)-|E(D)|+1$,
\item $\mbox{pd}\,(I(D))=|E(D)|-1$.
\end{enumerate}
\end{Theorem}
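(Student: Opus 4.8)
The plan is to prove the two formulas together by induction on $|V(D)|$. If $D$ has no pendant vertex, then every vertex has degree $\ge 2$, so the hypothesis forces $w(x)\ge 2$ for all $x\in V(D)$; hence $D$ has no source, its unique cycle is an oriented cycle, and $D$ coincides with that cycle, so the assertion is Lemma \ref{lem:9}. Otherwise, the inductive step peels off a pendant vertex by a Betti splitting of the polarized ideal, reducing $D$ to a strictly smaller vertex-weighted oriented unicyclic graph, with a case analysis running parallel to the proof of Theorem \ref{thm:3}.

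So suppose $D$ has a pendant vertex, and choose one, $v$, at maximal distance from the cycle, with unique neighbour $u$; then every neighbour of $u$ except the one on the path from $u$ to the cycle is a leaf, $d_D(u)\ge 2$, so $w_u\ge 2$, and $u$, not being a source, has $N_D^-(u)\ne\emptyset$. Let $g=u\,v^{w_v}$ if the edge at $v$ is oriented $u\to v$ and $g=v\,u^{w_u}$, with $w_v=1$, if it is oriented $v\to u$. In $I(D)^{\mathcal{P}}$ some polarized copy of $v$ divides $\mathcal{P}(g)$ and no other minimal generator (as $v$ is a leaf), so $K:=(\mathcal{P}(g))$ is principal, has a linear resolution, and collects all generators divisible by that copy; by Lemma \ref{lem:1} the decomposition $I(D)^{\mathcal{P}}=J+K$ with $J:=I(D\setminus v)^{\mathcal{P}}$ is a Betti splitting. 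Since $D\setminus v$ is again a vertex-weighted oriented unicyclic graph with $w(x)\ge 2$ for $d(x)\ne 1$ (only $\deg u$ drops, and $d_D(u)\ge 3$ had already forced $w_u\ge 2$), Lemma \ref{lem:6} and the induction hypothesis give
\[
\mbox{reg}\,(J)=\sum_{x\in V(D)}w(x)-w_v-|E(D)|+2,\qquad\mbox{pd}\,(J)=|E(D)|-2,
\]
while $\mbox{reg}\,(K)=\deg g$ and $\mbox{pd}\,(K)=0$, and a short degree count (the cycle alone contributes more than $\max\{w_u,w_v\}$ to $\sum_x(w(x)-1)$) gives $\deg g\le\sum_x w(x)-|E(D)|+1$; thus $K$ realizes neither maximum in Corollary \ref{cor:1}.

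Everything therefore reduces to showing $\mbox{reg}\,(J\cap K)=\sum_x w(x)-|E(D)|+2$ and $\mbox{pd}\,(J\cap K)=|E(D)|-2$, which by Corollary \ref{cor:1} (and Lemma \ref{lem:6}) then yields the formulas for $I(D)$. Computing least common multiples shows $J\cap K=\mathcal{P}(g)\cdot L$ with $\mbox{supp}\,(\mathcal{P}(g))\cap\mbox{supp}\,(L)=\emptyset$, where, up to relabelling polarization copies and with $b$ the exponent of $u$ in $g$,
\[
L=\bigl(\mathcal{P}(z^{w_z})\mid z\in N^+_{D\setminus v}(u)\bigr)+\bigl(\mathcal{P}(z\,u^{\,w_u-b})\mid z\in N^-_{D\setminus v}(u)\bigr)+I\bigl(D\setminus\{u,v\}\bigr)^{\mathcal{P}},
\]
so by Lemma \ref{lem:5} the problem becomes that of $\mbox{reg}\,(L),\mbox{pd}\,(L)$. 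I would then write $L=L_1+L_2$, where $L_2$ is the product of the ``standalone'' pure powers $\mathcal{P}(z^{w_z})$ (the out-neighbours $z$ of $u$ that are leaves, hence isolated in $D\setminus\{u,v\}$), handled directly by Lemmas \ref{lem:4} and \ref{lem:5}, and $L_1$ is everything else: this $L_1$ is the polarization of the edge ideal of a strictly smaller vertex-weighted oriented graph $H$ read off from $D\setminus\{u,v\}$ by growing a reduced-weight pendant $w_z-1$ at each surviving out-neighbour $z$ of $u$, by re-attaching $u$ (with weight $w_u-1$, as a sink receiving the former in-edges of $u$) when the edge at $v$ is $u\to v$, and by deleting the former in-neighbours of $u$ when it is $v\to u$ --- exactly the type of reduction used in the proof of Theorem \ref{thm:3}. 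When $u$ lies on the cycle, $H$ is a forest (deleting $u$ breaks the cycle), treated by Lemma \ref{lem:8} and its oriented-forest analogue (provable by the same peeling argument); when $u$ is an interior tree-vertex, $H$ is a disjoint union of a strictly smaller unicyclic graph and some single-edge components, treated by the induction hypothesis together with Lemma \ref{lem:4}. The exact shape of $H$ depends on the orientation of the edge at $v$ and on whether $d_D(u)=2$ or $d_D(u)>2$, yielding a case split parallel to Cases (1), (2) and (i), (ii) of Theorem \ref{thm:3}; in each case Lemmas \ref{lem:4}, \ref{lem:5}, \ref{lem:6}, \ref{lem:8} reassemble $\mbox{reg}\,(L),\mbox{pd}\,(L)$, hence $\mbox{reg}\,(J\cap K),\mbox{pd}\,(J\cap K)$, to the required values.

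The hard part is exactly this last identification: pinning down $H$ correctly in every orientation case --- especially when $u$ ends up as a leaf of $H$, when deleting $u$ disconnects the cycle, and when a reduced weight $w_u-1$ or $w_z-1$ enters --- and then checking that $H$ still satisfies $w(x)\ge 2$ for $d_H(x)\ne 1$, so that Lemma \ref{lem:8} (and its forest version) and the induction hypothesis genuinely apply; once $H$ is pinned down, the degree and edge bookkeeping is routine and closes the induction.
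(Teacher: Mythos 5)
Your route is genuinely different from the paper's: the paper proves Theorem \ref{thm:4} in two lines, by splitting $D$ into connected components via Lemma \ref{lem:4} and then quoting Lemma \ref{lem:8} for the tree components and Theorem \ref{thm:3} for the component containing the cycle. You instead run a fresh induction on $|V(D)|$, peeling off a pendant vertex by a Betti splitting of the polarized ideal, with the oriented cycle (Lemma \ref{lem:9}) as base case; in effect you are re-proving Theorem \ref{thm:3} rather than using it. The skeleton is sound --- the splitting $I(D)^{\mathcal{P}}=J+K$ with $K=(\mathcal{P}(g))$ principal is a valid application of Lemma \ref{lem:1}, and your description of $J\cap K=\mathcal{P}(g)\cdot L$ via least common multiples is correct --- but as written the argument has genuine gaps.

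First, since $\mbox{reg}\,(J)\leq\sum_{x}w(x)-|E(D)|+1$ and $\mbox{reg}\,(K)$ is small, the whole theorem hinges on the equalities $\mbox{reg}\,(J\cap K)=\sum_{x}w(x)-|E(D)|+2$ and $\mbox{pd}\,(J\cap K)=|E(D)|-2$, i.e.\ on pinning down the auxiliary graph $H$ and running the degree and edge count; this is exactly the step you leave qualitative and yourself flag as ``the hard part,'' whereas in the paper it occupies essentially the entire proof of Theorem \ref{thm:3}. Second, your claim that $D\setminus v$ again satisfies the hypotheses is incomplete in the $v\to u$ orientation: if $v$ is the unique in-neighbour of $u$, deleting $v$ turns $u$ into a source, the paper's convention then forces $w_u=1$ while your induction hypothesis is invoked with the recorded weight $w_u\geq 2$, so the value of $\sum_{x}w(x)$ entering the formula is wrong by $w_u-1$; worse, if $u$ lies on the cycle the induction can bottom out at a cycle that is not cyclically oriented, where Lemma \ref{lem:9} does not apply. (For the class of graphs actually covered by Theorem \ref{thm:3} --- cyclically oriented cycle with trees rooted away from it --- only the $u\to v$ orientation occurs and this problem disappears; your added generality is precisely where the argument breaks.) Third, when $u$ lies on the cycle you appeal to an ``oriented-forest analogue'' of Lemma \ref{lem:8}; no such statement exists in the paper, so it would have to be proved separately.
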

\begin{proof} Let $D_1,\ldots,D_m$ be all  connected components of $D$.  Thus  by Lemma \ref{lem:4}, we get
\begin{align*}
\mbox{reg}\,(I(D))&= \mbox{reg}\,(\sum\limits_{i=1}^{m}I(D_i))=\sum\limits_{i=1}^{m}\mbox{reg}\,(I(D_i))-(m-1),\\
\mbox{pd}\,(I(D))&= \mbox{pd}\,(\sum\limits_{i=1}^{m}I(D_i))=\sum\limits_{i=1}^{m}\mbox{pd}\,(I(D_i))+(m-1).
  \end{align*}

By Lemma \ref{lem:8},  it is enough to show
\begin{align*}
\mbox{reg}\,(I(D))&= \sum\limits_{x\in V(D)}w(x)-|E(D)|+1,\\
\mbox{pd}\,(I(D))&= |E(D)|-1
  \end{align*}
 where $D$ is a connected vertex-weighted oriented unicyclic graph.
The conclusion follows from Theorem \ref{thm:3}.
\end{proof}

An immediate consequence of the above theorem is the following corollary.
\begin{Corollary}\label{cor:3}
Let $D=(V(D),E(D),w)$ be a vertex-weighted oriented unicyclic graph with $mm$ connected components. Let  $w(x)\geq 2$  for any $d(x)\neq 1$. Then
\[\mbox{depth}\,(I(D))=m.\]
\end{Corollary}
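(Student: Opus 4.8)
The plan is to read off $\mbox{depth}\,(I(D))$ directly from the Auslander--Buchsbaum formula, feeding in the projective-dimension formula already established in Theorem~\ref{thm:4}, and then to convert the edge count $|E(D)|$ into the number of components by an elementary argument.

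First I would invoke the Auslander--Buchsbaum formula for the finitely generated graded module $I(D)$ over the polynomial ring $S=k[x_v\mid v\in V(D)]$, which is Cohen--Macaulay with $\mbox{depth}\,(S)=|V(D)|$; this gives
\[
\mbox{pd}\,(I(D))+\mbox{depth}\,(I(D))=|V(D)|.
\]
By Theorem~\ref{thm:4} we have $\mbox{pd}\,(I(D))=|E(D)|-1$, hence $\mbox{depth}\,(I(D))=|V(D)|-|E(D)|+1$. (Equivalently, one may pass through $\mbox{pd}\,(S/I(D))=\mbox{pd}\,(I(D))+1$ from Lemma~\ref{lem:2}, apply Auslander--Buchsbaum to $S/I(D)$ to get $\mbox{depth}\,(S/I(D))=m-1$, and then use $\mbox{depth}\,(I(D))=\mbox{depth}\,(S/I(D))+1$.)

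It remains to express $|E(D)|$ through $m$. Let $D_1,\dots,D_m$ be the connected components of $D$. Since $D$ is unicyclic, exactly one of them, say $D_1$, carries the unique cycle and satisfies $|E(D_1)|=|V(D_1)|$, while each of $D_2,\dots,D_m$ is a tree and satisfies $|E(D_i)|=|V(D_i)|-1$. Summing over $i$ yields $|E(D)|=|V(D)|-(m-1)$, so $|V(D)|-|E(D)|=m-1$. Plugging this into $\mbox{depth}\,(I(D))=|V(D)|-|E(D)|+1$ gives $\mbox{depth}\,(I(D))=(m-1)+1=m$, which is the assertion.

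There is no real obstacle here: the corollary is a formal consequence of Theorem~\ref{thm:4}. The only points deserving a word of justification are that the Auslander--Buchsbaum formula applies in the graded setting to the module $I(D)$ (or to $S/I(D)$, together with the relation of Lemma~\ref{lem:2}), and that the edge count uses precisely the hypothesis that $D$ contains exactly one cycle distributed among its components; both are routine.
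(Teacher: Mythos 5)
Your proof is correct and follows essentially the same route as the paper, whose entire proof is the one-line citation of the Auslander--Buchsbaum formula; you have simply supplied the details that the paper leaves implicit, namely combining $\mbox{pd}\,(I(D))=|E(D)|-1$ from Theorem~\ref{thm:4} with $\mbox{pd}\,(I(D))+\mbox{depth}\,(I(D))=|V(D)|$ and the count $|E(D)|=|V(D)|-(m-1)$ coming from the fact that exactly one component carries the unique cycle. No gaps.
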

\begin{proof}
It follows from Auslander-Buchsbaum formula.
\end{proof}

The following two examples show that the assumption  that  $w(x)\geq 2$  if $d(x)\neq 1$ in Theorem \ref{thm:3} and Theorem \ref{thm:4}  cannot be dropped.
\begin{Example}  \label{exm:3}
Let $I(D)=(x_1x_2^{2},x_2x_3^2,x_3x_4^{2},x_4x_5^2,x_5x_1^{2},x_1x_6,x_6x_7,x_7x_8^{2})$ be the edge ideal of a vertex-weighted oriented unicyclic  graph $D=(V(D),E(D),w)$  with $w_1=w_2=w_3=w_4=w_5=w_8=2$ and
$w_6=w_7=1$. By using CoCoA, we get $\mbox{reg}\,(I(D))=8$ and $\mbox{pd}\,(I(D))=6$.  But  we have
$\mbox{reg}\,(I(D))=\sum\limits_{i=1}^{8}w_i-|E(D)|+1=7$ and $\mbox{pd}\,(I(D))=|E(D)|-1=7$
 by Theorem \ref{thm:3}.
\end{Example}

The following two examples show that the projective dimension and the
regularity  of the edge ideals of vertex-weighted oriented unicyclic graphs are related to direction selection in  Theorem \ref{thm:3}.
\begin{Example}  \label{exm:6}
Let $I(D)=(x_1x_2^{3},x_2x_3^2,x_4x_3^2,x_4x_1^4,x_1x_5^{2})$  be the edge ideal of a vertex-weighted oriented unicyclic  graph $D=(V(D),E(D),w)$  with $w_1=4$, $w_2=3$, $w_3=w_5=2$ and $w_4=1$. By using CoCoA, we get $\mbox{reg}\,(I(D))=9$ and $\mbox{pd}\,(I(D))=3$.  But  we have
$\mbox{reg}\,(I(D))=\sum\limits_{i=1}^{5}w_i-|E(D)|+1=12-5+1=8$ and $\mbox{pd}\,(I(D))=|E(D)|-1=5-1=4$
 by Theorem \ref{thm:3}.
\end{Example}

\begin{Example}  \label{example7}
Let $I(D)=(x_1x_2^{2},x_2x_3^2,x_3x_4^4,x_1x_4^4,x_2x_5^2,x_5x_6^2,x_3x_7^2,x_7x_8^2)$ be the edge ideal of a vertex-weighted oriented unicyclic  graph $D=(V(D),E(D),w)$  with $w_1=1$, $w_2=w_3=w_5=w_{6}=w_7=w_8=2$   and $w_4=4$. By using CoCoA, we get $\mbox{reg}\,(I(D))=11$ and $\mbox{pd}\,(I(D))=6$.  But  we have
$\mbox{reg}\,(I(D))=\sum\limits_{i=1}^{8}w_i-|E(D)|+1=17-8+1=10$ and $\mbox{pd}\,(I(D))=|E(D)|-1=7$
 by Theorem \ref{thm:3}.
\end{Example}

\begin{acknowledgment*}
This research is supported by the Natural Science Foundation of Jiangsu Province (No. BK20221353). We gratefully acknowledge the use of the computer algebra system CoCoA (\cite{C}) for our experiments.
\end{acknowledgment*}

\bibliography{zhu}

\end{document}